        \definecolor{pink}{rgb}{1,0,1}
\newtheorem{theorem}{Theorem}
\newtheorem{prop}[theorem]{Proposition}
\newtheorem{ex}{Exercise}
\newtheorem{lemma}[theorem]{Lemma}
\newtheorem{question}[theorem]{Question}
\newtheorem{conj}{Conjecture}
\theoremstyle{remark}
\newtheorem{remark}{Remark}
\theoremstyle{definition}
\newtheorem{defn}[theorem]{Definition}
\numberwithin{equation}{section}
\newcommand{\M}{\mathfrak{M}}
\newcommand{\pa}{\partial}
\newcommand{\eps}{\varepsilon}
\newcommand{\N}{\mathbb{N}}
\newcommand{\R}{\mathbb{R}}
\newcommand{\cC}{\mathcal{C}}
\newcommand{\D}{\mathbb{D}}
\newcommand{\vphi}{\varphi}
\title{The sound of symmetry}
\author[Zhiqin Lu]{Zhiqin Lu}\address{Department of Mathematics, 410D Rowland Hall - University of California, Irvine, CA 92697-3875.} \email{zlu@uci.edu}
\author[Julie Rowlett]{Julie Rowlett} \address{Max Planck Institut f\"ur Mathematik, Vivatgasse 7, 53111 Bonn, Germany.} \curraddr{Mathematical Sciences, Chalmers University and the University of Gothenburg, 41296, Sweden.\\ \url{http://www.math.chalmers.se/~rowlett/}} \email{julie.rowlett@chalmers.se} 
\keywords{isospectral; trapezoid; parallelogram; regular polygon; P\'oly-Szeg\H{o} Conjecture; inverse spectral problem.  MSC primary 58C40, secondary 35P99.}
\begin{document} 
\begin{abstract}  This note begins with an introduction to the inverse isospectral problem popularized by M. Kac's 1966 article in the American Mathematical Monthly, ``Can one hear the shape of a drum?" Although the answer has been known for some twenty years now, many open problems remain.  Intended for general audiences, readers are challenged to complete exercises throughout this interactive introduction to inverse spectral theory.   Following the introduction, the main techniques used in inverse isospectral problems are collected and discussed.  These are then used to prove that one \em can \em hear the shape of:  parallelograms, acute trapezoids, and the regular n-gon.  Finally, we show that one can \em realistically \em hear the shape of the regular n-gon amongst all convex n-gons because it is uniquely determined by a finite number of eigenvalues;  the sound of symmetry can really be heard!  
\end{abstract}

\maketitle

\tableofcontents

\section{Introduction}

Have you heard the question, ``Can one hear the shape of a drum?''  Do you know the answer?  This question is the title of an article published in 1966 by M. Kac \cite{kac}  based on the following.  
\begin{question} 
If two planar domains have the same spectrum, are they identical up to rigid motions of the plane?  
\end{question} 

For a domain $\Omega$ in $\R^2$, the above mentioned \em spectrum \em is the set of eigenvalues of the Laplace operator $\Delta$ with Dirichlet boundary condition.  This is the set of all real numbers $\lambda$ such that there exists a smooth function $u$ on $\bar \Omega$ satisfying the Laplace equation and boundary condition 
\begin{equation} \label{lap} \Delta u(x,y) := \frac{\pa^2 u}{\pa x^2} + \frac{\pa^2 u }{\pa y^2} = - \lambda u(x,y), \quad u = 0 \textrm{ on the boundary of } \Omega. \end{equation} 
The Laplace equation originates from the wave equation 
$$\Delta f(x, y, t)=c^2\frac{\pa^2 f (x,y,t)}{\pa t ^2},$$
where $c$ is a positive constant depending on the elasticity of the vibrating material.  Identifying a domain $\Omega \subset \R^2$ with a vibrating drumhead, the function $f(x,y,t)$ gives the height of the drumhead at a point $(x,y) \in \Omega$ at time $t$, where $0$ is the height when the drum is not moving.  Since the boundary of the drumhead is fixed to the drum, which presumably is a solid material, this corresponds mathematically to the Dirichlet boundary condition.  Separating variables, that is assuming the solution $f$ can be expressed as 
$$f(x,y,t) = u(x,y)g(t),$$
it is straightforward to deduce ~\eqref{lap}.  

The eigenvalues of a bounded domain from a discrete set of $(0, \infty)$ 
$$0< \lambda_1 <\lambda_2 \leq \lambda_3 \ldots.$$
The set of eigenvalues $\{\lambda_k\}_{k=1} ^\infty$ is in bijection with the resonant frequencies a drum would produce if $\Omega$ were its drumhead.  With a perfect ear  one could hear all these frequencies and therefore know the spectrum.  Based on this physical description, Kac paraphrased Question 1 as, ``Can one hear the shape of a drum?''   In other words, if two drums sound identical to a perfect ear and therefore have identical spectra, then are they the same shape?

\subsection{Hearing a string}
Let's take a look at the simplest case and assume that our drum is actually a string, for example on a guitar or a violin.  If we hold the string so that its length is $\ell$ and then pluck it so that it vibrates, keeping the ends fixed, this is mathematically described by the ordinary differential equation
\[
f''(x) = - \lambda f(x),\quad f(0)=f(\ell)=0.
\]
We know from calculus that the solutions are 
\[
f_k (x) = \sin\left( \frac{k \pi x}{\ell} \right), \quad \textrm{ with } \lambda=\lambda_k=\frac{k^2\pi^2}{\ell^2}
\]
for $k=1,2,\cdots$.
\begin{question} Can one hear the shape of a string? \end{question} 
The ``shape'' of the string is just its length, so we can formulate the question as:  if we know the set of all $\{\lambda_k\}_{k=1} ^\infty$, then do we know the length of the string?  Well, it turns out that we actually only need to know $\lambda_1$ because then 
$$\ell = \sqrt{ \frac{\pi^2}{\lambda_1} }.$$
\begin{figure} 
  \caption{A Vibrating String} \includegraphics[width=200pt]{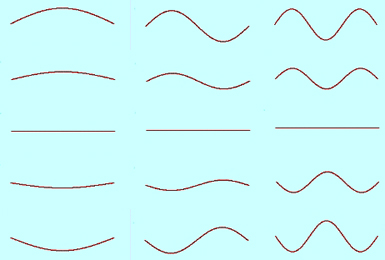}
\end{figure}

This shows that we can hear the length of the string based on the first eigenvalue.  Musically inclined readers are well aware of this fact because $\lambda_1$ determines the fundamental tone of the string.  The fact that $\lambda_1$ is in bijection with the length of the string mathematically corresponds to the fact that stringed instrument players can change the notes they play by holding the strings at different lengths.  

\subsection{The answer, open problems, and our results} 
Now that we have answered the question in one dimension, let's return to Kac's question in two dimensions.  Although perhaps the most natural drum is a circular drum, rectangular drums are a bit easier.  The Laplace equation for a rectangular domain is 
$$\frac{\pa^2 f}{\pa x^2} + \frac{\pa^2 f}{\pa y^2} = - \lambda f(x,y), \quad f(x,y) = 0 \textrm{ for } (x,y) \textrm{ on the boundary.}$$
Translating, rotating, or reflecting the domain doesn't change the numbers $\lambda$, so we can assume that the domain has vertices $(0,0)$, $(\ell, 0)$, $(0, w)$, $(\ell, w)$.  

\begin{ex}\label{e3} Prove that one \em can \em hear the shape of a rectangle.  
\end{ex}

Unfortunately it is only possible to compute the eigenvalues in closed form for a few special examples, such as rectangles and disks.  Without expressions for the eigenvalues, how can we answer Kac's question? 
Mathematically, the  question is equivalent to determining whether or not the following map
\[
\Lambda: \mathcal M\to \R^\infty, \quad \Omega\mapsto (\lambda_1,\lambda_2,\cdots,\lambda_k,\cdots),
\]
is injective, where $\mathcal M$ is the moduli space of all bounded domains (with piecewise smooth boundary) in $\R^2$. 

Kac's article appeared in print two years after a lovely one-page paper by Milnor \cite{milnor} which showed that \em one cannot hear the shape of a 16 dimensional drum.  \em  A flat torus is a Riemannian quotient manifold of the form $\R^n / L$ where $L$ is a lattice, that is a discrete additive subgroup of rank $n$.  Milnor used a construction of Witt \cite{witt} of self-dual lattices in $\R^{16}$ which are distinct in the sense that no rigid motion of $\R^{16}$ exists which maps one lattice to the other.  Consequently, Milnor was able to give a short proof that the corresponding tori are not isometric but have the same set of eigenvalues.   

Kac's original question however was for \em two dimensional \em domains, and consequently Milnor's result addresses the question for general dimensions but not for the specific case of two dimensions.  In 1985, Sunada introduced what he described as ``a geometric analogue of a routine method in number theory,'' which became known as \em the Sunada method \em \cite{sun} and can be used to produce large numbers of isospectral manifolds in four dimensions which are not isometric.  Buser generalized this method to construct isospectral non-isometric surfaces \cite{buser}.  However, this was still not quite the answer to Kac's question, since curved surfaces are not planar domains like the head of a drum.  

 Gordon, Webb, and Wolpert saw nonetheless that the basic ideas of Buser could be used to prove that the answer to Kac's question is ``no,'' by showing that the map $\Lambda$ is not injective on $\mathcal M$ \cites{gww,gww1}.  They proved that the two domains in Figure~\ref{drum} would sound identical to a perfect ear because they have identical spectrum, but as can be seen, the domains are not identical by rigid motion.     More recently Chapman published a charming article which shows how to construct isospectral non-isometric planar domains by folding paper \cite{chapman}.

\begin{figure} \caption{Identical sounding drums \cite{gww}}\label{drum}  \includegraphics[width=300pt]{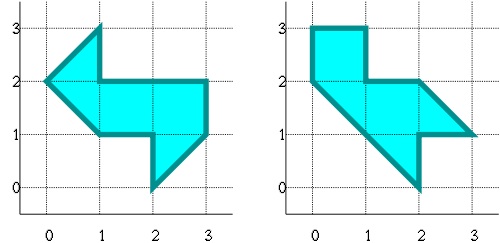} \end{figure}

Although it may seem that Kac's question was laid to rest in the 1990s, many open problems remain.
If we can't determine the shape of the domain completely by its spectrum, can we at least 
determine, or {\it hear}, some of  its geometric features like the convexity or smoothness of the boundary?  

 Motivated by these problems we shall investigate the injectivity of $\Lambda$ restricted to certain subsets of $\mathcal M$.  A natural choice is the set of convex $n$-gons since this set can be identified with a finite dimensional manifold with corners.  Surprisingly, even within this subset Kac's question is a subtle problem.  Durso proved \cite{dur} that one \em can \em hear the shape of a triangle, so in fact $\Lambda$ restricted to the moduli space of Euclidean triangles is injective (see also \cite{gm}).   For $n>3$ the problem is widely open.
 
  In this note, we present three theorems with simple albeit rather technical proofs.  Our  results seem to be new but more importantly, these proofs show most of the basic methods in inverse spectral theory in an elementary way.

 Our first result extends Durso's theorem to  parallelograms and acute trapezoids (see Definition ~\ref{dtrap}).  

\begin{theorem}\label{t1} If two parallelograms have the same spectrum, then they are identical up to rigid motions of the plane.  If two acute trapezoids have the same spectrum, then they are identical up to rigid motions of the plane.  
\end{theorem}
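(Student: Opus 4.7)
The plan is to extract spectral invariants from the heat-trace asymptotic for polygonal domains. For a simply connected polygonal $\Omega$ with interior angles $\alpha_1,\dots,\alpha_n$ the expansion reads
$$\sum_{k\geq 1}e^{-\lambda_k t}=\frac{|\Omega|}{4\pi t}-\frac{|\partial\Omega|}{8\sqrt{\pi t}}+\sum_{i=1}^n\frac{\pi^2-\alpha_i^2}{24\pi\alpha_i}+O(e^{-c/t}),$$
so the area, perimeter, and the weighted corner sum are immediately spectral invariants. This is the basic input for both halves of the theorem.

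For parallelograms, the moduli space is three-dimensional, parametrised by two side lengths $a,b$ and one angle $\theta\in(0,\pi/2]$, which matches exactly the number of invariants above. The angles are $\theta,\pi-\theta,\theta,\pi-\theta$, and a short simplification collapses the corner sum to
$$\frac{\pi^2}{12\,\theta(\pi-\theta)}-\frac{1}{12},$$
a strictly monotone function of $\theta(\pi-\theta)$ on $(0,\pi/2]$, which pins down $\theta$. The area and perimeter then give $ab\sin\theta$ and $a+b$, so $\{a,b\}$ is recovered as the roots of a quadratic, and the parallelogram is determined up to rigid motion.

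The trapezoid case is the main obstacle, since the moduli of acute trapezoids is four-dimensional (parallel sides $a<b$, height $h$, and horizontal offset $x\in(0,b-a)$ of the short side), one more than the number of heat-trace invariants. The plan is to bring in the wave trace: the singular support of $\sum e^{-i\sqrt{\lambda_k}t}$ is contained in the length spectrum of closed billiard orbits. In an acute trapezoid the perpendicular bounce between the two parallel sides is an isolated, non-degenerate periodic orbit of length $2h$, and one shows, using the acute hypothesis on the base angles, that it is the shortest closed orbit; hence $h$ is spectrally determined as the smallest positive element of the singular support.

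Having recovered $h$, there remain three unknowns $(a,b,x)$ governed by three equations: area $(a+b)h/2$, perimeter $a+b+\sqrt{x^2+h^2}+\sqrt{(b-a-x)^2+h^2}$, and a corner sum depending only on the two base angles $\arctan(h/x)$ and $\arctan(h/(b-a-x))$. The involution $x\leftrightarrow b-a-x$ corresponds to reflecting the trapezoid and thus is a rigid motion, causing no obstruction to injectivity. The remaining technical step is to show that these three equations determine $a+b$ together with the unordered pair $\{x,b-a-x\}$ uniquely; this should follow from the strict monotonicity of the corner contribution in each base angle combined with the perimeter equation, after which $a$ and $b$ are pinned down via the area relation. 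The hard parts are therefore (a) verifying that the $2h$ orbit is isolated and minimal across the entire acute-trapezoid moduli, so that it produces a recognisable wave-trace singularity, and (b) carrying out the nonlinear inversion of the three-equation system to conclude injectivity.
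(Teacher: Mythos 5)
Your overall strategy coincides with the paper's: heat-trace invariants (area, perimeter, corner sum) for both families, supplemented by the wave trace to extract the height of the trapezoid from the shortest closed geodesic of length $2h$. The parallelogram half is complete and correct; recovering the unordered pair $\{a,b\}$ as the roots of $z^2-(a+b)z+ab=0$ is, if anything, slightly cleaner than the paper's route, which solves a quadratic for the height and must then argue which root is geometrically admissible.

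The trapezoid half, however, has a genuine gap exactly where the paper does its real work. After recovering $h$, $B+b$, $\ell+\ell'$, and the corner sum, one is reduced to showing that the system
$$\csc\alpha+\csc\beta=p,\qquad \frac{1}{\alpha(\pi-\alpha)}+\frac{1}{\beta(\pi-\beta)}=q,\qquad 0<\beta\le\alpha\le\frac{\pi}{2},$$
has at most one solution, and you assert this ``should follow from the strict monotonicity of the corner contribution in each base angle combined with the perimeter equation.'' That is not an argument: two symmetric functions, each strictly monotone in each variable separately, need not jointly separate points of the fundamental domain $\beta\le\alpha$ (consider $\alpha+\beta$ and $2\alpha+2\beta$). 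What is needed is that the level curves of the two constraints cross at most once, a transversality statement. The paper proves it by solving the second equation for $\beta(\alpha)$, writing
$$g'(\alpha)=\frac{\pi-2\alpha}{\alpha^2(\pi-\alpha)^2}\bigl(f(\alpha)-f(\beta)\bigr),\qquad f(\alpha)=\frac{\alpha^2(\pi-\alpha)^2\cos\alpha}{(2\alpha-\pi)\sin^2\alpha},$$
and then establishing the strict monotonicity of $f$ on $(0,\pi/2)$ through a delicate convexity argument for its logarithmic derivative; this occupies the bulk of the proof and is not routine. Your step (a) is likewise only asserted, although the paper's argument there is short (a triangular billiard orbit would force a right angle at a leg, contradicting $\alpha+\beta<\pi/2$); note also that the length-$2h$ bouncing-ball orbits form a one-parameter family, so they are neither isolated nor non-degenerate, though this does not affect the conclusion that the first positive wave-trace singularity occurs at $2h$.
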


This shows that one \em can \em hear the shape of parallelograms and acute trapezoids.  

\begin{theorem} \label{t2} If an n-gon is isospectral to a regular n-gon, then  they are identical up to rigid motions of the plane.  
\end{theorem}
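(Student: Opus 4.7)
The plan is to extract three geometric invariants of the $n$-gon from the Dirichlet heat trace and then pin the polygon down using a convexity argument together with the polygonal isoperimetric inequality. For a simple polygonal domain $\Omega$ with interior angles $\alpha_1, \ldots, \alpha_n$, the Dirichlet heat trace admits the short-time expansion
\begin{equation*}
\sum_{k=1}^{\infty} e^{-\lambda_k t} = \frac{|\Omega|}{4\pi t} - \frac{|\pa \Omega|}{8\sqrt{\pi t}} + \sum_{j=1}^n \frac{\pi^2 - \alpha_j^2}{24 \pi \alpha_j} + O(e^{-c/t}),
\end{equation*}
so isospectral $n$-gons must share the same area $|\Omega|$, the same perimeter $|\pa \Omega|$, and the same corner invariant $C(\Omega) := \sum_j f(\alpha_j)$, where $f(\alpha) = \frac{\pi^2 - \alpha^2}{24\pi \alpha} = \frac{\pi}{24\alpha} - \frac{\alpha}{24\pi}$. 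The first step is to recall (or quickly justify via the model-wedge heat kernel and a localization/cut-and-paste argument on the polygon) that these three quantities are indeed spectral invariants.

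The second step uses $C(\Omega)$ to force equiangularity. A direct computation gives $f''(\alpha) = \frac{\pi}{12 \alpha^3} > 0$, so $f$ is strictly convex on $(0,\infty)$. For any simple $n$-gon one has $\sum_j \alpha_j = (n-2)\pi$, so Jensen's inequality yields
\begin{equation*}
C(\Omega) \geq n\, f\!\left(\frac{(n-2)\pi}{n}\right),
\end{equation*}
with equality if and only if all $\alpha_j$ are equal. The regular $n$-gon realizes the equality, and the hypothesis that $\Omega$ is isospectral to it forces equality for $\Omega$ as well, so every interior angle of $\Omega$ equals $(n-2)\pi/n$. In particular $\Omega$ is convex.

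The third step invokes the polygonal isoperimetric inequality: among all simple $n$-gons (or, if one prefers a lighter statement, among all equiangular $n$-gons) with a fixed perimeter, the regular $n$-gon uniquely maximizes area. Since $\Omega$ has the same perimeter and the same area as the regular $n$-gon, the equality case forces $\Omega$ to be congruent to the regular $n$-gon, completing the proof. The main obstacle, and the part that requires genuine analysis rather than inequalities, is justifying the corner term in the heat trace expansion — establishing that each vertex contributes exactly $f(\alpha_j)$ to the constant term requires a careful comparison with the exact heat kernel on an infinite wedge of opening $\alpha_j$ and a uniform estimate on the remainder. Once that is granted, the rest of the argument is elementary: strict convexity of $f$ plus the isoperimetric inequality do all the work.
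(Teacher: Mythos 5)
Your argument is correct, and it reaches the conclusion by a genuinely different route from the paper. The paper's proof uses only the first two heat invariants (area and perimeter) and then applies Proposition~\ref{p1} in full strength: the scale-invariant ratio $|\Omega|/|\pa\Omega|^2$ is \emph{uniquely} maximized among $n$-gons by the regular one, so equality of area and perimeter with $R_n$ already forces $Q$ to be regular. You instead also extract the corner invariant $\sum_j \frac{\pi^2-\alpha_j^2}{24\pi\alpha_j}$ and observe that $\alpha \mapsto \frac{\pi}{24\alpha}-\frac{\alpha}{24\pi}$ is strictly convex, so by Jensen and $\sum_j \alpha_j = (n-2)\pi$ the regular $n$-gon \emph{minimizes} this invariant, with equality only in the equiangular case; isospectrality then forces $\Omega$ to be equiangular. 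This is an elegant use of the third heat coefficient that the paper does not exploit here. The one soft spot is your final step: the ``polygonal isoperimetric inequality'' with its uniqueness clause is not a free citation in this paper --- it is exactly Proposition~\ref{p1}, whose proof (compactness of the maximizing sequence, the existence-of-maximizer issue that Perron lampooned, and the local adjustment arguments) is where all the real work of the paper's proof of Theorem~\ref{t2} lives. As written, your Jensen step is logically redundant: if you grant the full isoperimetric inequality, area and perimeter alone suffice, which is the paper's proof. The step earns its keep only if you use it to downgrade the needed lemma to the statement that among \emph{equiangular} $n$-gons of fixed perimeter the regular one uniquely maximizes area --- a genuinely lighter, finite-dimensional fact (a quadratic optimization over side lengths subject to the closing condition) --- but then you should actually prove that restricted statement rather than gesture at it parenthetically. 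The heat-trace expansion itself, including the corner term, is taken as known in the paper (equation~\eqref{heat}), so your flagging of that as the main analytic obstacle is consistent with the paper's framing.
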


Theorem~\ref{t2} shows that the symmetry of the regular $n$-gon can be heard among all $n$-gons, which in the spirit of Kac we paraphrase as follows.  
\begin{quote}
 \em Among all $n$-gons, one can hear the symmetry of the regular one.  
\end{quote}

Theorems ~\ref{t1} and ~\ref{t2}, like Durso's Theorem, use the \em entire \em spectrum.  Physically this is like having a \em perfect \em ear, which is impossible.  It is therefore interesting to consider isospectral problems involving a finite part of the spectrum.  A well known conjecture due to P\'olya and Szeg\H{o} is the following.  

\begin{conj}[P\'olya-Szeg{\H{o}}] For each $n \geq 3$, the regular $n$-gon uniquely minimizes $\lambda_1$ among all $n$-gons with fixed area. 
\end{conj}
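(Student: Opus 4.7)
The plan is to combine two classical tools: Steiner symmetrization, which preserves area and does not increase $\lambda_1$, together with the Hadamard variational formula for $\lambda_1$ under boundary perturbations. For $n=3$ and $n=4$ the argument is direct and essentially due to P\'olya--Szeg\H{o} themselves: given a triangle (respectively quadrilateral) $\Omega$ of area $A$, perform Steiner symmetrization with respect to a line perpendicular to an edge. A short geometric check shows that the number of sides does not increase, the area is preserved, and $\lambda_1$ does not increase. Iterating along a well-chosen sequence of directions one can produce a subsequence converging to the equilateral triangle (respectively the square), and strict monotonicity of $\lambda_1$ under non-trivial symmetrization gives equality only at the regular $n$-gon.

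For $n\geq 5$ the direct Steiner argument fails because symmetrizing an $n$-gon can produce a figure with more than $n$ sides, so one leaves the admissible class. My plan would then be to work directly on the finite-dimensional moduli space $\cM_n$ of $n$-gons of fixed area modulo rigid motions, which has dimension $2n-4$. I would first show existence of a minimizer by a compactness argument: $\lambda_1$ blows up as the inradius tends to zero, so a minimizing sequence stays in a compact region of $\cM_n$ and a limit exists. Next, using the Hadamard formula
\[
\frac{d\lambda_1}{dt}\bigg|_{t=0} = -\int_{\pa\Omega}|\n u_1|^2\, v\, d\sigma,
\]
where $u_1$ is the $L^2$-normalized first eigenfunction and $v$ is the normal velocity of the boundary perturbation, the rotational $D_n$-symmetry of the regular $n$-gon forces it to be a critical point of $\lambda_1\big|_{\cM_n}$.

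The third and main step is to upgrade this to uniqueness of the global minimizer. I would attempt this in two stages: local, then global. Locally, one computes the Hessian of $\lambda_1$ at the regular $n$-gon on the tangent space to $\cM_n$; decomposing this tangent space into irreducible representations of $D_n$ reduces the Hessian to a short list of scalar eigenvalues which one hopes to check are strictly positive. Globally, one would try to rule out any other critical point of $\lambda_1$ on $\cM_n$, for instance by a continuous symmetrization that interpolates between an arbitrary $n$-gon and the regular one while monotonically decreasing $\lambda_1$.

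The hard part, and the reason the P\'olya--Szeg\H{o} conjecture is still open for $n\geq 5$, is precisely this last global step. Local minimality at the regular $n$-gon is tractable via representation-theoretic block-diagonalization of the Hessian, and existence of a minimizer on $\cM_n$ is routine, but there is no known symmetrization on the class of $n$-gons that is both area-preserving and monotone in $\lambda_1$, and excluding non-regular critical points seems to require a new idea. A realistic intermediate goal, and what I would actually try to carry out in detail, is to prove that the regular $n$-gon is a strict local minimizer and that $\lambda_1$ has no critical points on the ``boundary'' of $\cM_n$ corresponding to near-degenerate polygons; the global uniqueness of the minimizer would then reduce to a connectedness-plus-Morse argument on $\cM_n$ that is the true obstacle.
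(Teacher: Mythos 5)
You were asked to prove a statement that the paper itself records as an open \emph{conjecture}: the paper offers no proof of it, and the remark immediately following notes that only the cases $n=3,4$ are theorems, due to P\'olya and Szeg\H{o}. Your write-up is correctly calibrated to this reality. The $n=3,4$ sketch via Steiner symmetrization is the classical argument, and your diagnosis of why it fails for $n\geq 5$ --- symmetrization generally increases the number of sides, so one leaves the admissible class --- is exactly the obstruction the paper points out in \S~\ref{ladj}. The program you outline for $n\geq 5$ (existence of a minimizer by compactness, criticality of the regular $n$-gon from its $D_n$-symmetry and the Hadamard formula, local minimality via block-diagonalizing the Hessian over irreducibles of $D_n$, then a global uniqueness step) is reasonable, but the global step is precisely the open problem. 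Since you flag it as such rather than claiming it, there is no error in your proposal; there is only the unavoidable gap that is the reason the statement remains a conjecture.

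It is worth comparing your plan with what the paper actually proves nearby. Proposition~\ref{p1} carries out, for the scale-invariant functional $f(\Omega)=|\Omega|/|\partial\Omega|^2$ rather than for $\lambda_1$, essentially the finite-dimensional variational program you describe: existence of an extremizer by compactness and induction on the number of sides, followed by a \emph{side-number-preserving} local adjustment (translating a single edge in its outward normal direction) whose first variation is explicitly computable and forces equal sides and then equal angles. That argument closes because the first variations of area and perimeter are elementary geometric quantities, whereas the first variation of $\lambda_1$ involves $\int_{\partial\Omega}|\nabla u_1|^2 v\,d\sigma$ with $u_1$ unknown for a general $n$-gon --- which is exactly where your global step stalls. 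The paper then settles only the weak version, Theorem~\ref{t3}, by a completely different route: heat-trace invariants give area and perimeter, Proposition~\ref{p1} pins down the regular $n$-gon in the limit, and the Chang--DeTurck analyticity theorem upgrades finitely many eigenvalue coincidences to full isospectrality. If you want a goal you can actually reach with the tools in your proposal, that weak statement is the one to aim at.
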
 

\begin{remark} For $n=3,4$ this is a theorem proven by P\'olya and Szeg{\H{o}} in \cite{po}.  
\end{remark} 

The physical interpretation of the P\'olya-Szeg{\H{o}} Conjecture is that the symmetry of the regular $n$-gon amongst all $n$-gons of fixed area is uniquely distinguished by its fundamental tone (corresponding to $\lambda_1$).  Our last result is known as the weak P\'olya-Szeg{\H{o}} Conjecture.

\begin{theorem}[Weak P\'olya-Szeg{\H{o}} Conjecture]  \label{t3} 
For each $n \geq 3$ there exists $N$ which depends only on $n$ such that if the first $N$ eigenvalues of a convex $n$-gon coincide with those of a regular $n$-gon, then it is congruent to that regular $n$-gon.  
\end{theorem}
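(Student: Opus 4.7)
The plan is a contradiction argument combining compactness in the moduli space with real-analytic rigidity at the regular $n$-gon. The moduli space $\cM_n$ of convex $n$-gons modulo rigid motions is a finite-dimensional manifold with local coordinates the vertex positions, and we write $P_*$ for the regular $n$-gon of some fixed size. Suppose Theorem~\ref{t3} fails: then for every $N\ge 1$ there exists a convex $n$-gon $P_N \in \cM_n$ with $P_N \not\cong P_*$ and $\lambda_k(P_N)=\lambda_k(P_*)$ for all $k\le N$.

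First I would establish compactness of the sequence $(P_N)$. Matching $\lambda_1$ together with Faber--Krahn bounds the area of $P_N$ from below; the inequality $\lambda_1(\Omega)\ge \pi^2/w(\Omega)^2$ for convex $\Omega$ of minimum width $w(\Omega)$ bounds the minimum width from below; and Weyl's asymptotic $\lambda_k A\sim 4\pi k$ applied at matched large indices $k\le N$ bounds the area from above once $N$ is large. These estimates keep $P_N$ in a compact region, and after passing to a subsequence $P_N$ converges in Hausdorff distance to a convex polygon $P_\infty$ with at most $n$ sides. By continuity of Dirichlet eigenvalues under such convergence, $\lambda_k(P_\infty)=\lambda_k(P_*)$ for all $k$. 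Degenerate limits in which a vertex angle tends to $\pi$ and $P_\infty$ has fewer than $n$ corners are excluded by the polygonal heat-trace asymptotics: matching the full spectrum matches the entire heat trace, whose corner contributions see the interior-angle data. Hence $P_\infty$ is a nondegenerate $n$-gon isospectral to $P_*$, so by Theorem~\ref{t2}, $P_\infty\cong P_*$ and $P_N\to P_*$ in $\cM_n$.

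Second, I would exploit real-analytic rigidity at $P_*$. By Kato--Rellich perturbation theory, in a neighborhood $U$ of $P_*$ each Dirichlet eigenvalue splits into finitely many real-analytic branches, all agreeing at $P_*$. Let $\{f_\alpha\}$ be the germs at $P_*$ of these branches minus their values at $P_*$. By Theorem~\ref{t2} applied to $P_*$, the common zero set of $\{f_\alpha\}$ in $U$ is exactly $\{P_*\}$. Since the local ring $\mathcal{O}_{P_*}$ of real-analytic germs at $P_*$ is Noetherian (being isomorphic to a ring of convergent power series in finitely many variables), the ideal generated by $\{f_\alpha\}$ is in fact generated by finitely many of them, arising from finitely many eigenvalue indices, all at most some $K$ depending only on $n$. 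Shrinking $U$, any $P\in U$ with $\lambda_k(P)=\lambda_k(P_*)$ for $k\le K$ must equal $P_*$.

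Combining the two steps: for $N\ge K$ large enough that $P_N\in U$, the matching of the first $K$ eigenvalues forces $P_N=P_*$, contradicting $P_N \not\cong P_*$. Hence some finite $N$ depending only on $n$ suffices, as claimed. The main obstacle is the compactness step, and in particular ruling out limits in which vertices collide to produce an $m$-gon with $m<n$; a clean treatment requires either quantitative heat-kernel corner asymptotics or a uniform lower bound on vertex angles extracted from the matched low eigenvalues. By contrast, once compactness is in hand, the local Noetherian rigidity argument at $P_*$ is conceptually very clean.
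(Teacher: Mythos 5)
Your architecture is the paper's: argue by contradiction, extract a convergent subsequence, show the limit is a nondegenerate $n$-gon isospectral to the regular one (hence congruent to it by Theorem~\ref{t2}), and then invoke local finite determination of the spectrum via real-analyticity and Noetherianity of the ring of analytic germs. The genuine gap is exactly where you suspect it: the compactness step. ``Weyl's asymptotic $\lambda_k A\sim 4\pi k$ applied at matched large indices bounds the area from above'' is not a proof, because Weyl's law is an asymptotic for a \emph{fixed} domain with no uniformity over a family whose geometry degenerates, and the dangerous degeneration --- diameter $d_k\to\infty$ with width bounded away from $0$ and $\infty$ --- is precisely the regime where the low eigenvalues cluster near $\pi^2/w_k^2$ and the remainder in Weyl's law is out of control. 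The paper replaces this with a quantitative two-eigenvalue estimate (Proposition~\ref{pr-a}): domain monotonicity gives $\lambda_1(\Omega_k)>\pi^2/w_k^2$ from the containing strip, while a long thin triangle $T_k\subset\Omega_k$ gives $\lambda_2(\Omega_k)\le\lambda_2(T_k)\le\pi^2/w_k^2+c\,d_k^{-2/3}$, so the fundamental gap collapses, contradicting the matched (fixed) values of $\lambda_1,\lambda_2$. Some such quantitative upper bound on $\lambda_2$ of a long convex domain is unavoidable here; your sketch needs it supplied.

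The two sub-steps you implement differently are both workable but under-argued. To exclude a limit with $j<n$ corners you appeal to the corner term of \eqref{heat}; this does succeed, but only after the computation you omit: writing $h(\alpha)=\tfrac{1}{24}\left(\tfrac{\pi}{\alpha}-\tfrac{\alpha}{\pi}\right)$, convexity of $h$ and $\sum\alpha_i=(j-2)\pi$ give $\sum_i h(\alpha_i)\ge \tfrac{j-1}{6(j-2)}$, which is strictly decreasing in $j$, so a convex $j$-gon with $j<n$ has corner sum strictly larger than the regular $n$-gon's. (The paper instead uses the scale-invariant ratio $f=|\Omega|/|\pa\Omega|^2$ and Proposition~\ref{p1}; both are heat-trace arguments.) In the rigidity step you re-derive Chang--DeTurck rather than citing it, which is fine in principle, but two points are glossed over. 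First, ``analytic dependence of the spectrum on the vertex positions'' needs a meaning: the paper's Proposition~\ref{propcd} supplies it by pulling back to a fixed disk via the Schwarz--Christoffel map, producing an analytic family of metrics to which Theorem~\ref{th-cd} applies. Second, for a \emph{multi-parameter} analytic family the individual eigenvalues do not split into analytic branches (Kato--Rellich is a one-parameter statement); the analytic objects are symmetric functions of clustered eigenvalues, which is precisely the device Chang and DeTurck use. With those repairs your argument closes up into essentially the published proof.
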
 

This work is intended for  a general mathematical audience, so we begin in \S~\ref{s2} with an overview of methods used in eigenvalue problems.  These are then used to prove Theorem~\ref{t1} in \S ~\ref{s3} and Theorem ~\ref{t2} as well as the Weak P\'olya-Szeg{\H{o}} Conjecture (Theorem ~\ref{t3}) in \S~\ref{s4}. Conjectures and {open problems} comprise \S~\ref{s5}.  

\section{Methods}\label{s2}
Some readers may be familiar with the so-called ``Steiner Symmetrization'' technique, named after the German mathematician Jakob Steiner \cite{stein}.  Mathematical objects are often separated into three classes, like for example conic sections:  parabolic, elliptic, and hyperbolic.  Steiner would have recognized this as an example of an old German saying ``Alle gute Dinge sind drei.''  (All good things come in three).  In this tradition we have distinguished three general types of methods used in spectral theory.  Unlike conic sections, however, these methods have non-empty intersection.

\subsection{Geometric techniques}   \label{ladj} 
Steiner symmetrization is an example of a more general technique known in this context as {\it local adjustment}.

  Quantities which are determined by the spectrum are known as \em spectral invariants, \em so one can say that these quantities can be ``heard.'' 
  We shall see in \S ~\ref{analytic} that the  the area and the perimeter of a domain can be ``heard''.
   Consequently it is possible to ``hear'' the following function 
\begin{equation} \label{f} f(\Omega) := \frac{|\Omega|}{|\pa \Omega|^2}, \end{equation}
where $|\Omega|$ denotes the area of a domain $\Omega$, and $|\pa \Omega|$ denotes the perimeter of $\Omega$.  Note that $f$ is invariant under scaling.  Therefore, $f$ detects only the shape but not the scale of a domain.  

The ancient Greeks (see \cite{iso}) proved that for any polygon $\Omega$, $f(\Omega) \leq f(\D)$, where $\D$ is a disk.  Steiner gave a beautiful and geometric proof of this fact and generalized the result to three dimensions in \cite{stein}.  We recall some of the basic ideas.  Steiner began by stating the following ``Fundamental Theorem.''  

\begin{quote}[Steiner 1838]  
\em Among all triangles with the same base and height, the isosceles has the smallest perimeter.  
\em 
\end{quote} We include a short proof by picture.  Consider Figure ~\ref{ladj1} in which the sides of the triangle $ABC$ satisfy $|BC| > |AB|$.  Moving the vertex $B$ to $B'$ such that $|A'B| = |BC|$ and $|AB'| = |A'B'| = |B'C|$, then since $A',B',A$ are collinear,  
$$|AB| + |BC| = |AB| + |A'B| > |AA'| = |AB'| + |B'C|.$$
Consequently the isosceles triangle $AB'C$ has the same area as triangle $ABC$ but smaller perimeter.  

\begin{figure} \caption{Local adjustment to equal sides increases $f$}\label{ladj1}

\includegraphics[width=100pt]{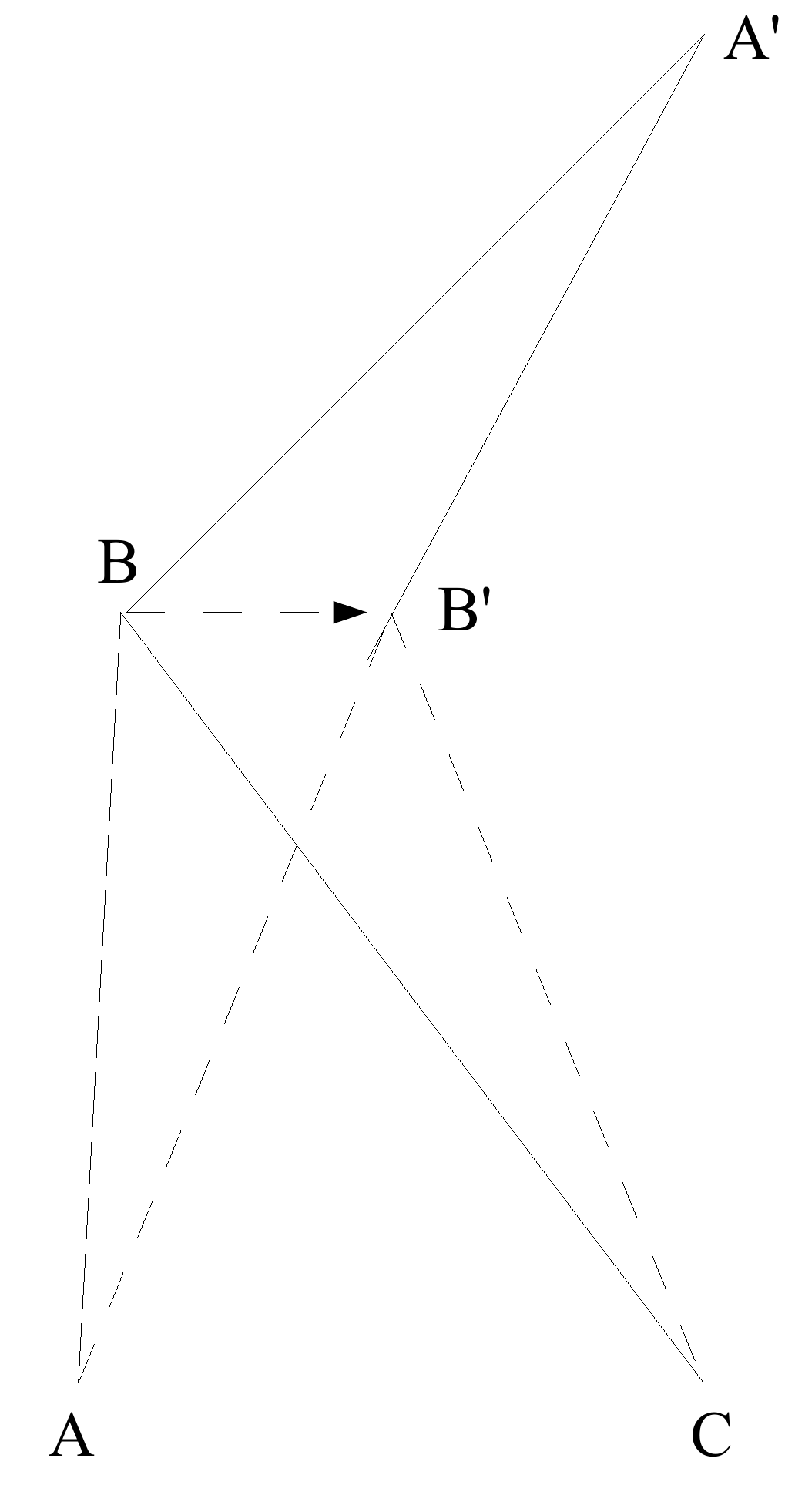} \end{figure} 

 Steiner then stated and proved the following theorem:  if the angles adjacent to at least one of the parallel sides of a trapezoid are not equal, then there is a trapezoid with the same area and base with smaller perimeter and which is symmetric about an axis.  He used these two theorems to prove a third theorem:  an arbitrary convex polygon can be deformed to a convex polygon with the same area, smaller perimeter, and which is symmetric about an axis.  This is known as ``Steiner symmetrization'' and was beautifully illustrated at the end of \cite{stein}.  This technique was used by Poly\'a and Szeg\H{o} to prove that among all convex $n$-gons of fixed area, the regular one minimizes the first eigenvalue for $n=3$, $4$.  The proof no longer works for $n \geq 5$ because Steiner's symmetrization generally increases the number of sides as can be seen in the illustrations of [ibid].  For his proof this is no problem, since his goal was to prove that $f$ is uniquely maximized by the disk.  One detail was unfortunately overlooked in Steiner's proof:  the existence of a maximizer.  Oskar Perron expanded upon and poked fun at the potential consequences of such an oversight in \cite{perron}.  
 
We shall prove a slight variation of Steiner's isoperimetric inequality which will then be used to prove Theorem 2.  
\begin{prop}\label{p1} The function $f$ defined in ~\eqref{f} is uniquely maximized among all $n$-gons by the regular $n$-gons. 
\end{prop}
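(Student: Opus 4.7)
The strategy exploits the scale invariance of $f$: it suffices to show that, among $n$-gons with area normalized to $1$, the regular $n$-gon uniquely minimizes the perimeter. The plan has three conceptual steps: (i) establish existence of a minimizer in the class of convex $n$-gons of area $1$; (ii) show a minimizer must be equilateral by applying Steiner's fundamental theorem at each vertex; (iii) show a minimizer must be inscribed in a circle, via the classical theorem that among $n$-gons with prescribed side lengths the cyclic one has the greatest area. An equilateral cyclic polygon is forced to be regular, since equal chords subtend equal central arcs.

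For (ii), suppose three consecutive vertices $A,B,C$ of a minimizer $\Omega^*$ satisfy $|AB|\neq|BC|$. Slide $B$ along the line through $B$ parallel to $AC$ to the point $B'$ with $|AB'|=|B'C|$. The triangles $ABC$ and $AB'C$ share base and height, so the area of $\Omega^*$ is preserved, and Steiner's fundamental theorem (illustrated in Figure~\ref{ladj1}) guarantees that the perimeter strictly decreases unless $|AB|=|BC|$, a contradiction. For (iii), if $\Omega^*$ is not cyclic, the classical inequality produces a polygon $\Omega'$ with the same side lengths and strictly larger area; rescaling $\Omega'$ back to area $1$ then gives a strictly smaller perimeter, again contradicting minimality. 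Uniqueness follows from the strictness in both local adjustments.

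The main obstacle is the existence step, precisely the oversight Perron dramatized in \cite{perron}. Apply Blaschke selection to a minimizing sequence of convex $n$-gons of area $1$: the perimeter is bounded (the regular $n$-gon is a competitor), the diameter of a convex set is dominated by its perimeter, and after translation the sequence lies in a fixed ball. A subsequence converges in Hausdorff distance to a convex set $\Omega^*$ with $|\Omega^*|=1$ (by continuity of area on convex bodies) and perimeter no larger than the infimum (by lower semicontinuity). The worry is that $\Omega^*$ could degenerate to a $k$-gon with $k<n$. Rule this out by induction on $n$: the base case $n=3$ follows directly from Steiner's fundamental theorem, since an equilateral triangle is automatically regular; the inductive step uses the elementary monotonicity of $k\tan(\pi/k)$ in $k$, which implies that the regular $k$-gon of area $1$ has strictly larger perimeter than the regular $n$-gon for every $k<n$, so a degenerate limit cannot be optimal. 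A minor secondary concern, that the sliding in (ii) preserves convexity, is handled by taking an infinitesimal adjustment, which is sufficient to derive the contradiction.
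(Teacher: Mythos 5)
Your argument is correct, and it reaches the conclusion by a genuinely different route in its second half. The reduction to minimizing perimeter at fixed area, the compactness/existence step (ruling out degeneration to a $k$-gon with $k<n$ by induction, using the strict monotonicity of $k\tan(\pi/k)$, which is exactly the monotonicity of $f(R_n)=\tfrac{1}{4n\tan(\pi/n)}$), and the use of Steiner's Fundamental Theorem to force the minimizer to be equilateral all mirror the paper's proof. Where you diverge is the equiangularity step: the paper carries out an explicit first variation, translating one edge outward parallel to itself, expanding $A(t)$ and $L(t)$ to first order, and deducing $\tan(\alpha_i/2)+\tan(\alpha_{i+1}/2)=L/(2A)$ for every $i$, from which $\alpha_i=\alpha_{i+2}$ and a short case analysis (odd versus even $n$) finish the job. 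You instead invoke the classical theorem that among convex polygons with prescribed side lengths the cyclic one uniquely maximizes area, so an equilateral minimizer must be cyclic, hence regular. Your route is shorter and more conceptual but imports a nontrivial classical input (whose standard proof attaches circular arcs to the sides and appeals to the isoperimetric inequality); the paper's route is self-contained calculus and stays entirely within the local-adjustment philosophy it is trying to showcase. Two small points to tidy up: the proposition is stated for all $n$-gons, so you should prepend the paper's one-line reduction to the convex case (reflecting across a concave vertex preserves the perimeter and the number of sides while strictly increasing the area, hence strictly increasing $f$); and in step (ii) you should note that the infinitesimal slide of $B$ strictly decreases $|AB|+|BC|$ because that sum is a strictly convex function of the position of $B$ on the line parallel to $AC$, with its minimum at the isosceles position, so being away from the minimum forces a nonzero derivative.
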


\begin{proof} 
Note that reflecting across a concave vertex preserves the perimeter while increasing the area thereby increasing $f$, so we only need to consider the convex $n$-gons.  

Our proof is related to Steiner's but differs slightly due to the fact that our adjustment techniques should preserve the number of sides.  The proof is by induction on the number of sides.    We first address the existence of a maximizer.  For the regular $n$-gon $R_n$, 
$$f(R_n) = \frac{1}{4n \tan (\pi/n)} < \frac{1}{4 \pi} = f(\D).$$
We see that $\{f(R_n)\}_{n=1} ^\infty$ is a strictly increasing sequence.  Let the set of all convex $n$-gons be denoted by $\M_n$.  Since $f$ is bounded above, let's define $f_\infty$ to be the supremum of $f$ over all $n$-gons.  Since it's the supremum, there must be a sequence of $n$-gons $\{P_k\}_{k=1} ^\infty$ with diameter equal to a fixed constant such that $f(P_k) \to f_\infty$.  What can happen to this sequence as $k \to \infty$?  If the $P_k$'s were to collapse to a segment, then the area would tend to $0$ but the perimeter would stay bounded from below and consequently 
$$\lim_{k \to \infty} f(P_k) = 0 < f(R_n).$$
For the base case of induction, $n=3$, and so we see that the sequence cannot collapse, and since the diameters are all equal to a fixed constant, a subsequence of $P_k$ converge to some triangle $T$. Clearly $f$ is a continuous function  on $\M_3$, and so $f_\infty = f(T)$ in this case.  By Steiner's Fundamental Theorem $T$ must be equilateral;  the details of this proof are left to the reader.  

Proceeding by induction we assume that for some $N \geq 3$ the proposition is true for all $n \leq N$.  Let $n = N+1$.  If the sequence of $n$-gons $P_k$ such that 
$$f(P_k) \to \sup \{ f(\Omega) : \Omega \in \M_n \}, \quad \textrm{ as } k \to \infty$$ collapses to an $m$-gon $P$ for some $m<n$, then by the induction hypothesis 
$$f(P_k) \to f(P) \leq f(R_m) < f(R_n), \quad \textrm{where $R_m$ is a regular $m$-gon.} $$  
This contradicts the definition of the sequence $\{P_k \}_{k=1} ^\infty$.  Consequently, restricting to a subsequence if necessary, we may assume that $P_k \to P \in \M_n$.  

By Steiner's Fundamental Theorem adjacent sides of $P$ must be equal (see Figure ~\ref{ladj1}).  

We shall use local adjustment to show that $P$ must also be equiangular.  Since $f$ is scale invariant and $P$ has equal sides, let's assume that the sides of $P$ all have length equal to one.  Consider the edge between two vertices $v_{i}$ and $v_{i+1}$, where the vertices are considered modulo $n$, that is 
$$n+k \equiv k \textrm{ mod } n, \quad 1 \leq k \leq n.$$
Denote by $\{\gamma_i \}_{i=1} ^n$ the set of interior angles of $P$.  The exterior angles, 
$$\alpha_i := \pi - \gamma_i, \quad i=1, \ldots, n.$$
Let's think about what happens if we move the edge between $v_i$ and $v_{i+1}$ in the direction of the outward normal to this edge, in other words moving the edge parallel to its position in the direction away from the interior of $P$; see Figure ~\ref{ladj2}.  Let $t$ denote the distance the edge is translated.  Then the area 
$$A(t) = A + t + O(t^2),$$
where $A$ is the area of $P$.  The perimeter 
$$L(t) = L + t \csc \alpha_i - t \cot \alpha_i + t \csc \alpha_{i+1} - t \cot \alpha_{i+1} + O(t^2),$$
where $L$ is the perimeter of $P$.  We therefore define  
$$\vphi(x) := \csc x - \cot x = \tan \left( \frac{x}{2} \right).$$

\begin{figure}\caption{Local adjustment to equal angles increases $f$} \label{ladj2} \includegraphics[width=200pt]{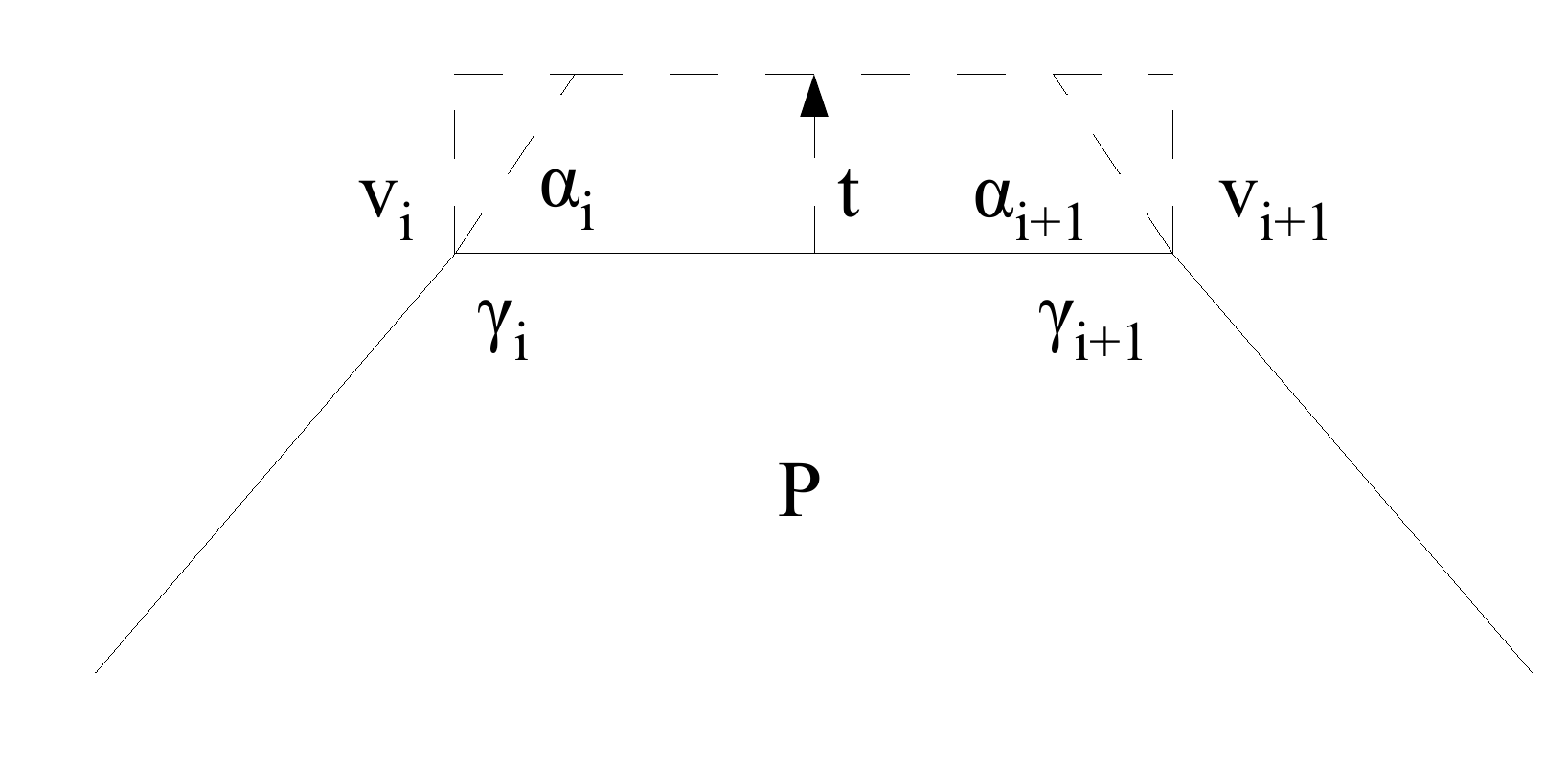} \end{figure}

The function $f$ can be considered as a function of the parameter $t$, where $t=0$ corresponds to $P$.    Since $P$ maximizes the function $f$, 
$$f'(0) = f'(P) = 0.$$
Substituting the expressions for $A(t)$ and $L(t)$ above, 
$$f(t) = \frac{A(t)}{L(t)^2} = \frac{A + t + O(t^2)}{(L + t (\vphi(\alpha_i) + \vphi(\alpha_{i+1}))+ O(t^2))^2} $$
$$= \left( \frac{A}{L^2} + \frac{t}{L^2} + O(t^2) \right) \left( 1 + \frac{2t}{L} \left( \vphi(\alpha_i) + \vphi(\alpha_{i+1}) \right) + O(t^2) \right)^{-1}.$$
For $t$ sufficiently small we can expand the last term on the right in a geometric series, 
$$f(t) = \frac{A}{L^2} + t \left( \frac{1}{L^2} - \frac{2A}{L^3} \left(\vphi(\alpha_i) + \vphi(\alpha_{i+1}) \right) \right) + O(t^2).$$
By calculus, the derivative of $f$ at $t=0$ is the coefficient of $t$, so 
$$f'(0) =  \frac{1}{L^2} - \frac{2A}{L^3} \left(\vphi(\alpha_i) + \vphi(\alpha_{i+1}) \right) = 0 \implies \vphi(\alpha_i) + \vphi(\alpha_{i+1}) = \frac{L}{2A}.$$
Since $1\leq i \leq n$ was chosen arbitrarily, this shows that 
\begin{equation} \label{vphi1} \vphi(\alpha_i) + \vphi(\alpha_{i+1}) = \frac{L}{2A}, \quad \forall i \textrm{ modulo } n. \end{equation} 
This implies 
$$\vphi(\alpha_i) = \vphi(\alpha_{i+2}) \quad \forall i \textrm{ modulo } n.$$
Since $\vphi(x) = \tan (x/2)$ is a monotonically increasing function on $(0,\pi)$, and the interior angles $\alpha_i \in (0, \pi)$, $\vphi$ is injective and 
\begin{equation} \label{modn} \alpha_i = \alpha_{i+2} \quad \forall i \textrm{ modulo } n. \end{equation} 
If $n$ is odd, then we're done because 
$$\alpha_1 = \alpha_3 = \ldots = \alpha_n = \alpha_{n+2} = \alpha_2  = \alpha_4 = \ldots = \alpha_{n-1},$$
which shows that $P$ is equiangular.  If $n$ is even, let's take a closer look at $\vphi$.  Since
$$\sum_{i=1} ^n \alpha_i = 2\pi,$$ 
by ~\eqref{modn}
$$\frac{n}{2} \alpha_1 + \frac{n}{2} \alpha_2 = 2\pi \implies \alpha_1 + \alpha_2 = \frac{4\pi}{n}.$$
Consequently
\begin{equation} \label{vphi2} \vphi(\alpha_1) + \vphi(\alpha_2) = \vphi(\alpha_1) + \vphi\left( \frac{4\pi}{n} - \alpha_1 \right) = \tan\left(\frac{\alpha_1}{2} \right) + \tan \left( \frac{2\pi}{n} - \frac{\alpha_1}{2} \right).\end{equation}  
By switching names if necessary we may assume without loss of generality that $0 < \alpha_1 \leq \frac{2\pi}{n}$.  Since the sides all have length $1$, it follows that $L=n$.  Putting ~\eqref{vphi1} together with ~\eqref{vphi2} shows that 
$$ \vphi(\alpha_1) + \vphi(\alpha_2) =  \tan\left(\frac{\alpha_1}{2} \right) + \tan \left( \frac{2\pi}{n} - \frac{\alpha_1}{2} \right) = \frac{n}{2A},$$
and 
$$\frac{1}{f(P)} = \frac{L^2}{A} = 2n \left( \vphi(\alpha_1) + \vphi(\alpha_2) \right)  = 2n \left( \tan\left(\frac{\alpha_1}{2} \right) + \tan \left( \frac{2\pi}{n} - \frac{\alpha_1}{2} \right) \right).$$
Since $P$ maximizes $f$, it follows that $P$ minimizes $1/f$.  The derivative with respect to $\alpha_1$ of the function
$$\frac{1}{f} = 2n \left( \tan\left(\frac{\alpha_1}{2} \right) + \tan \left( \frac{2\pi}{n} - \frac{\alpha_1}{2} \right) \right),$$
is strictly negative for $\alpha_1 \in (0, 2\pi/n)$.  Since $P$ minimizes $1/f$, 
$$\alpha_1 = \frac{2\pi}{n} \implies \alpha_2 = \frac{4\pi}{n} - \alpha_1 = \frac{2\pi}{n} \implies \alpha_i = \frac{2\pi}{n} \quad \forall i.$$
\end{proof}

\subsection{Analytic techniques}  \label{analytic} 
Using the definition of the eigenvalues and the chain rule from calculus, one can prove the following scaling property of the eigenvalues 
$$\Omega \mapsto c \Omega \implies \lambda_k (\Omega) \mapsto c^{-2} \lambda_k (\Omega).$$
This simply means that a domain $\Omega$ is scaled by a constant factor $c$, so if $\Omega$ is a polygon, then its side lengths are all multiplied by $c$, and the eigenvalues are divided by $c^2$.  

\subsubsection{Variational principles} 
The eigenvalues can be defined by a \em variational principle, \em also known as a ``mini-max'' principle.  The eigenvalues are the infima of the Rayleigh-Ritz quotient 
\begin{equation} \label{rk} \lambda_k = \inf \left\{ \left . \frac{ \int_\Omega |\nabla f |^2 dxdy }{\int_\Omega f^2 dx dy} \right|,  \int_{\Omega} f f_j dx dy =0,  j=0, \ldots, k-1 \right\} \end{equation}
where $f_0 \equiv 0$, and $f_j$ is an eigenfunction for $\lambda_j$ for $j\geq 1$.  These formulae can be found in \cite{chavel}.  An equivalent formula found in \cite{cour-hil} is the so-called ``maxi-min'' principle 
\begin{equation} \label{maximin} \lambda_k = \inf_{L \subset H^{1,2} _0 (\Omega), \, \dim(L) = k} \left\{ \sup_{f \in L} \frac{ \int_\Omega |\nabla f |^2 dxdy }{\int_\Omega f^2dxdy }  \right\}. \end{equation} 
In both variational formulae the Rayleigh-Ritz quotient is taken over all $\cC^2$ smooth functions on $\Omega$ which vanish on the boundary and are not identically zero.  The variational principles can be used that the eigenvalues are continuous functions of the domains, so if a sequence of domains $\Omega_k \to \Omega_0$ as $k \to \infty$, then the eigenvalues 
$$\lambda_i (\Omega_k) \to \lambda_i (\Omega_0), \quad \textrm{ as } k \to \infty, \textrm{ for each } i \in \N.$$ 
The maxi-min principle can be used to prove \em domain monotonicity:  \em 
$$\Omega \subseteq \Omega' \implies \lambda_i (\Omega) \geq \lambda_i (\Omega'), \quad \forall i.$$
For a one dimensional domain, this simply means that shorter strings produce higher frequencies, a fact well-known by musicians. 

\subsubsection{Fundamental gap}  
The difference between the first two eigenvalues is known as the \em fundamental gap.  \em  A recent theorem proven by Andrews and Clutterbuck \cite{ac} shows that if the diameter of a convex domain tends to zero, then its fundamental gap blows up.  On the other hand, we'll see in the proposition below that if a domain is very large, then its fundamental gap becomes small.  The proposition will be a key ingredient in the proof of the Weak P\'olya-Szeg\H{o} conjecture. 

\begin{prop}  \label{pr-a} If a sequence of convex domains $\{\Omega_k\}_{k=1} ^\infty$ in $\R^2$ satisfies 
$$\lambda_i (\Omega_k) = \lambda_i (\Omega_j), \quad \forall j, k \in \N, \quad i=1,2,$$
then both the diameters and the in-radii of the domains are contained in a compact subset of $(0, \infty)$.  
\end{prop}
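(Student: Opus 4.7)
My plan is to bound the diameter and the in-radius from above and from below using domain monotonicity for three of the four bounds, and the fundamental gap $b-a$ together with a compactness argument for the fourth. Write $a := \lambda_1(\Omega_k)$ and $b := \lambda_2(\Omega_k)$, both constant in $k$ with $0 < a < b$.

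Three of the four bounds follow from the monotonicity principle of \S\ref{analytic} applied to the in-ball, the circumscribed ball, and a containing strip. Since $B(x_0,r_k) \subseteq \Omega_k$, $a \le \lambda_1(B(x_0,r_k)) = c_0/r_k^2$ where $c_0 := j_{0,1}^2$ is the first Dirichlet eigenvalue of the unit disk, hence $r_k \le \sqrt{c_0/a}$. Since $\Omega_k$ sits inside a ball of radius $D_k$, $a \ge c_0/D_k^2$ gives $D_k \ge \sqrt{c_0/a}$. For the lower bound on $r_k$ I would invoke the classical planar inequality $w_k \le 3 r_k$ relating the minimum width and the in-radius of a convex body in $\R^2$; since $\Omega_k$ is contained in an infinite strip of width $w_k$, separation of variables gives $a \ge \pi^2/w_k^2 \ge \pi^2/(9 r_k^2)$, and so $r_k \ge \pi/(3\sqrt a)$.

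The upper bound on $D_k$ is the substantive step. Suppose toward contradiction that $D_k \to \infty$. Rotate $\Omega_k$ so that a longest chord lies along the $x$-axis, let $u_k$ denote the $L^2$-normalised first eigenfunction, and set $\bar x_k := \int x\, u_k^2$. The trial function $v_k := (x - \bar x_k) u_k$ is $L^2$-orthogonal to $u_k$, and a direct integration by parts using $-\Delta u_k = a u_k$ yields the \emph{variance identity}
\begin{equation}\label{gapvar}
\frac{\int_{\Omega_k} |\nabla v_k|^2}{\int_{\Omega_k} v_k^2}
\;=\; a + \frac{1}{\sigma_k^2}, \qquad \sigma_k^2 := \int_{\Omega_k} (x-\bar x_k)^2 u_k^2.
\end{equation}
Plugging $v_k$ into the Rayleigh-Ritz characterisation \eqref{rk} of $\lambda_2$ yields $b - a \le 1/\sigma_k^2$, i.e.\ $\sigma_k^2 \le 1/(b-a)$; the same calculation in any coordinate direction shows that the measure $u_k^2\,dx\,dy$ has variance bounded uniformly in $k$ in \emph{every} direction.

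Translating $\Omega_k$ so that the centre of mass of $u_k^2$ sits at the origin, the variance bound together with Chebyshev prevents any $L^2$-mass of $u_k$ from escaping to infinity. Passing to a subsequence, $\Omega_k \to \Omega^\ast$ in local Hausdorff, where $\Omega^\ast$ is convex, unbounded (because $D_k \to \infty$), and has in-radius inside the bounded interval already obtained; such a set is sandwiched between two parallel strips and must contain a half-infinite strip. Simultaneously $u_k \rightharpoonup u^\ast$ weakly in $H^1$ with $\|u^\ast\|_2 = 1$ (no mass loss) and $\|\nabla u^\ast\|_2^2 \le a$ by lower semicontinuity, giving $\lambda_1(\Omega^\ast) \le a$. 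Conversely any $\phi \in C_c^\infty(\Omega^\ast)$ lies in $H^1_0(\Omega_k)$ for $k$ large and has Rayleigh quotient $\ge a$, so $\lambda_1(\Omega^\ast) = a$. A \emph{translation trick} in the strip direction of $\Omega^\ast$ now produces a pair of compactly supported, disjointly supported near-minimisers with Rayleigh quotients $\le a+\epsilon$, forcing $\lambda_2(\Omega^\ast) \le a+\epsilon$ for every $\epsilon>0$, and hence $\lambda_2(\Omega^\ast)=a$. But the inclusion $C_c^\infty(\Omega^\ast) \hookrightarrow H^1_0(\Omega_k)$ and \eqref{maximin} together give $b = \lambda_2(\Omega_k) \le \lambda_2(\Omega^\ast) = a$, contradicting $b > a$. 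The main obstacle I anticipate is geometric: verifying that $\Omega^\ast$ really contains a half-infinite strip (so the translation trick is available) and justifying the local-Hausdorff comparison of the Rayleigh-Ritz quantities. Once these geometric facts are in place, the variance identity \eqref{gapvar} and the translation trick do all of the analytic work.
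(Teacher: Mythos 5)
Your proof is correct in outline and reaches the right conclusion, but the one genuinely hard step --- the upper bound on the diameter --- is handled by a genuinely different mechanism than the paper's. Three of your four bounds (inscribed disk against $\lambda_1$, circumscribed disk, and the strip/width comparison $w_k\le 3r_k$) match the paper's use of domain monotonicity essentially verbatim. For the diameter, the paper stays entirely inside domain monotonicity: it places the thin triangle $T_k=ABC$ of Figure~\ref{f-tri} inside $\Omega_k$ (the width segment $AB$ plus a point $C$ at horizontal distance $d_k/3$), invokes the quantitative estimate $\lambda_2(T_k)\le \pi^2/w_k^2+c\,d_k^{-2/3}$ from \cite{crm}, and concludes $\lambda_2(\Omega_k)-\lambda_1(\Omega_k)\le c\,d_k^{-2/3}\to 0$, which also yields the explicit decay rate recorded in the remark following the proposition. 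You instead prove the gap collapse via the classical trial function $v_k=(x-\bar x_k)u_k$ in \eqref{rk}, giving $\lambda_2-\lambda_1\le 1/\sigma_k^2$, and then run a concentration--compactness argument on a local Hausdorff limit $\Omega^\ast$ with a translation trick along a recession direction. Your route is self-contained (no appeal to the external thin-triangle estimate) but is softer and qualitatively weaker (no rate), and it front-loads the technical burden onto the convergence statements: that $u_k\to u^\ast$ with no mass loss and $u^\ast\in H^{1,2}_0(\Omega^\ast)$ (Mosco-type convergence, which does hold for convex domains), and that the test functions on $\Omega^\ast$ are eventually admissible for $\Omega_k$ in \eqref{maximin}. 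Two small simplifications would tighten your write-up: you do not need $\Omega^\ast$ to contain a half-infinite strip --- unboundedness of the closed convex limit gives a recession direction $e$ with $\Omega^\ast+te\subseteq\Omega^\ast$ for all $t\ge 0$, which is all the translation trick requires; and the nondegeneracy of $\Omega^\ast$ near the origin should be deduced from $\|u^\ast\|_2=1$ (so $\Omega^\ast$ has positive measure, hence nonempty interior by convexity) rather than from transferring the in-radius bound of $\Omega_k$, since a priori the inscribed disk of $\Omega_k$ could drift arbitrarily far from the centre of mass of $u_k^2$.
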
 
\begin{proof} If the in-radii of the domains $r_k \to \infty$, then the domains contain bigger and bigger disks, and so we can estimate using domain monotonicity because there are formulae for the eigenvalues of the disk. 

\begin{ex} Prove that the eigenvalues of the disk of radius $R$ are 
$$R^{-2} j_{m,n} ^2, \quad m \in \N, \quad n \in \{0\} \cup \N,$$
where $j_{m,n}$ is the $m^{th}$ zero of the Bessel function $J_n$ of order $n$. 
\end{ex} 

By the exercise and domain monotonicity
$$\lambda_1 (\Omega_k) \leq \frac{1}{r_k^2}  j_{1,0} ^2 \to 0, \textrm{ as } r_k \to \infty.$$
Since the first eigenvalue is fixed, this is impossible.  

On the other hand, if the in-radii tend to $0$, then there are rectangles $R_k$ of height $h_k$ such that $\Omega_k \subset R_k$ and $h_k \to 0$.  By domain monotonicity, 
$$\lambda_1 (\Omega_k) \geq \lambda_1 (R_k) > \frac{\pi^2}{h_k^2} \to \infty, \quad \textrm{ as } k \to \infty,$$
which is impossible, and since the diameter is bounded below by the in-radius,  the diameters also cannot tend to $0$

What happens if the in-radii stay bounded away from $0$ and $\infty$ while the diameters $d_k$  tend toward $\infty$?  Let's define the \em width \em $w_k$ of $\Omega_k$ to be the shortest distance between two infinite parallel lines such that $\Omega_k$ fits within a strip of width $w_k$.  The in-radii are bounded away from both $0$ and $\infty$ and so the widths are also bounded uniformly away from $0$ and $\infty$.  Next, let's rotate and translate the domains such that the width $w_k$ goes from the point $A:=(0, w_k/2)$ to the point $B:=(0, -w_k/2)$, and these points lie on the boundary of $\Omega_k$.  By domain monotonicity, since $\Omega_k$ is contained in a rectangle of width $w_k$, 
$$\lambda_1 (\Omega_k) \geq \lambda_1 (\textrm{Rectangle of width $w_k$}) > \frac{\pi^2}{w_k^2}.$$
Reflecting across the vertical axis if necessary, we may assume there is a point in $\Omega_k$ whose horizontal component is $d_k/3$.  Let's call this point $C$; see Figure ~\ref{f-tri}.  

\begin{figure} \caption{Triangles $T_k$ and $T_k'$}\label{f-tri}

\includegraphics[width=160pt]{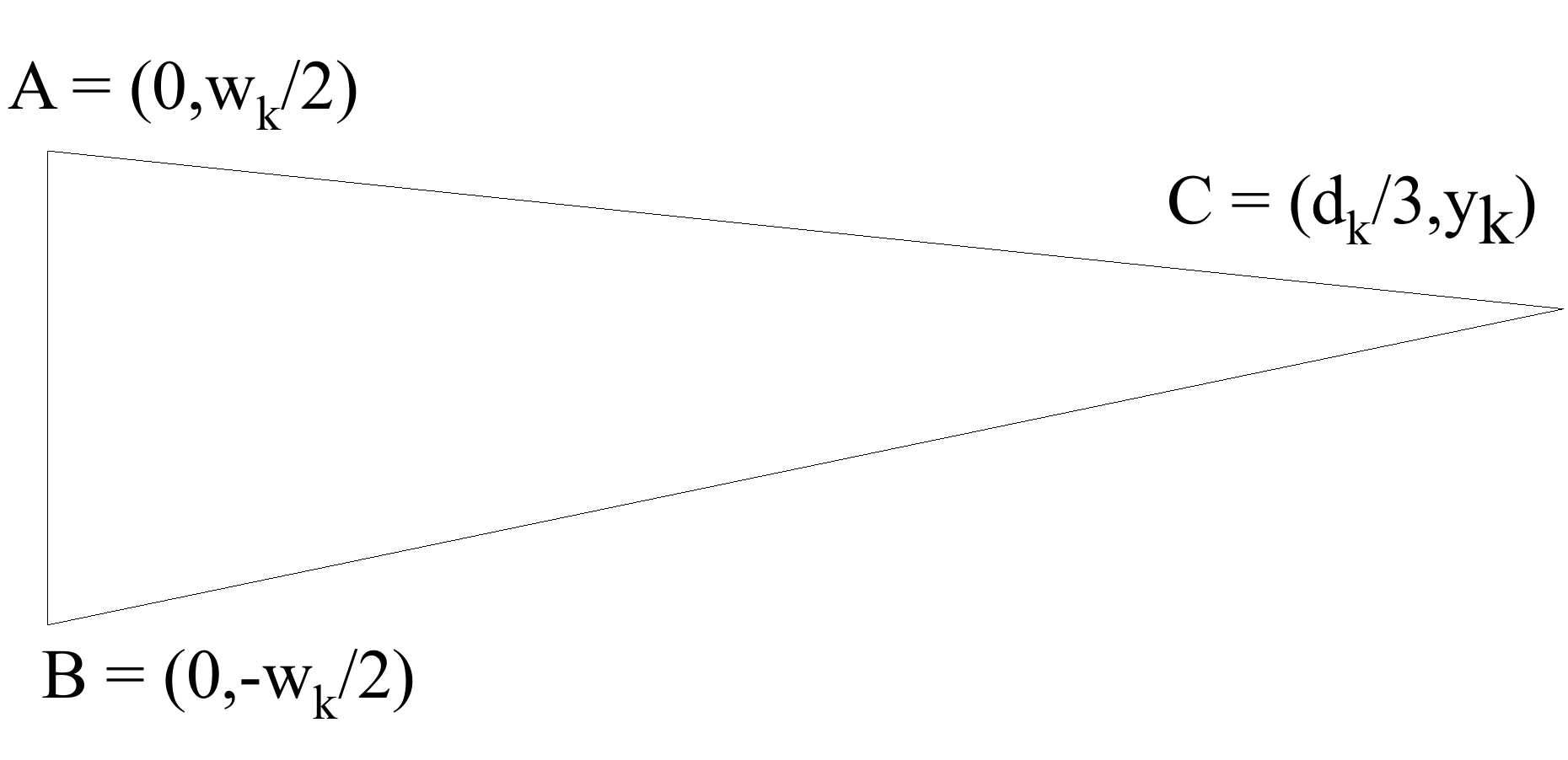} \includegraphics[width=110pt]{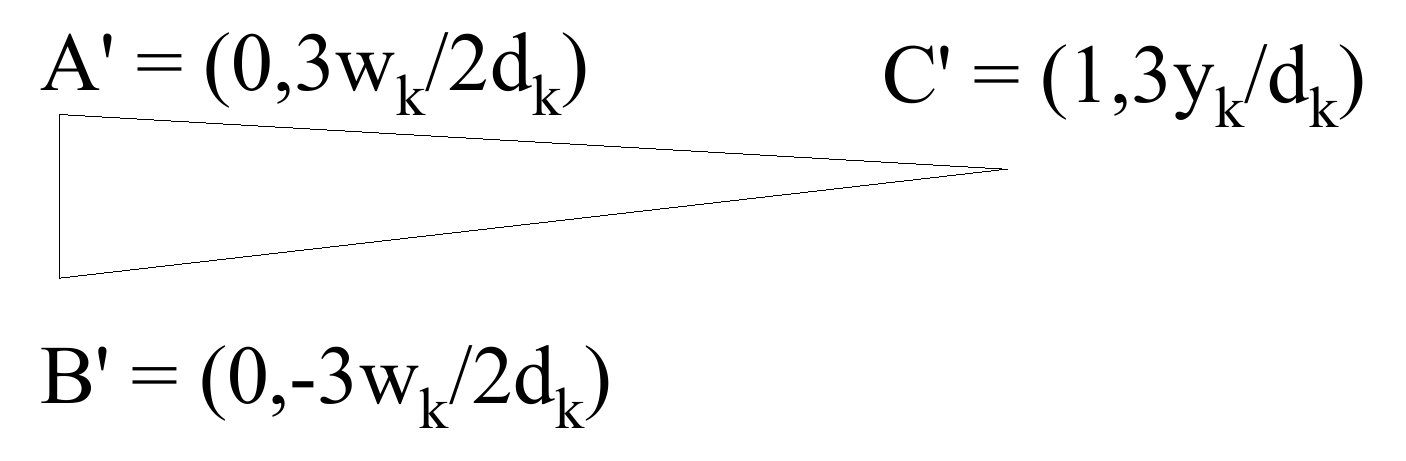} \end{figure} 

By convexity, triangle $T_k :=ABC$ is contained in $\Omega_k$.  It follows from \cite{crm} (c.f. \cite{strip}) that 
$$\lambda_2 (T_k) \leq \frac{\pi^2}{w_k^2} + c d_k^{-2/3},$$
for a fixed constant $c>0$.  To see this, we scale $T_k$ by $\frac{3}{d_k}$.  The resulting triangle, $T_k'$ has one angle that is very small and tends to zero as $d_k \to \infty$, while the other two angles remain bounded away from zero, and their opposite sides tend toward $1$.  By the estimates in the proof of Proposition 1 of \cite{crm} (c.f. similar estimates in \cite{frtri}) 
$$\lambda_2 (T_k') \leq \frac{\pi^2 d_k^2}{9 w_k^2} + c' d_k^{4/3},$$
for a fixed constant $c'>0$.  Rescaling by $\frac{d_k}{3}$ and using the scaling property of the eigenvalues, 
$$\lambda_2 (T_k)  = \frac{9}{d_k^2} \lambda_1 (T_k') \leq \frac{\pi^2}{w_k^2} + cd_k^{-2/3}, \quad c = 9c'.$$
Consequently, by domain monotonicity since $T_k \subset \Omega_k$, 
$$\lambda_2 (\Omega_k) \leq \lambda_2 (T_k) \leq \frac{\pi^2}{w_k^2} + c d_k^{-2/3}.$$
Together with the lower bound for $\lambda_1 (\Omega_k)$ we have
$$\lambda_2 (\Omega_k) - \lambda_1 (\Omega_k) \leq c d_k^{-2/3} \to 0 \textrm{ as } d_k \to \infty.$$
Since these eigenvalues are fixed, their difference can't vanish as $k \to \infty$, which shows that both the diameters and the in-radii must be contained in a compact subset of $(0, \infty)$.  
\end{proof} 

\begin{remark} Although many authors have studied lower bounds for the fundamental gap, upper bounds appear to be scarce in the literature and may warrant further investigation.  The proposition is a tiny bit of progress in this direction because the proof implies that if a sequence of convex domains has in-radii bounded away from both zero and infinity, then there is a constant $c>0$ such that the fundamental gap 
$$\lambda_2 (\Omega_k) - \lambda_1 (\Omega_k) \leq c d_k ^{-2/3}.$$
\end{remark}

\subsubsection{The heat trace}  \label{s-heat} 
The spectrum not only determines the resonant frequencies of vibration but also the flow of heat.  The heat trace is 
$$\sum_{k=1} ^\infty e^{-\lambda_k t}.$$  
The eigenvalues $\{\lambda_k \}_{k=1} ^\infty$ grow at an asymptotic rate known as \em Weyl's law \em \cite{weyl} which can be used to prove that the sum converges uniformly for all $t \geq t_0 > 0$ for any positive $t_0$ but diverges as $t \to 0$. Before Kac's article was published, mathematicians and physicists had observed that the way in which the heat trace diverges for $t \downarrow 0$ can be used to show that the spectrum determines certain geometric features.   Pleijel \cite{pl} proved that the heat trace admits an asymptotic expansion as $t \to 0$ of  the form 
$$ \sum_{k=1} ^\infty e^{-\lambda_k t} \sim \frac{|\Omega|}{4 \pi t} - \frac{| \pa \Omega|}{8 \sqrt{\pi t}}, \quad t \downarrow 0, $$
where $|\Omega|$ denotes the area of $\Omega$, and $|\pa \Omega|$ is the perimeter.  Kac determined the third term in the asymptotic expansion in \cite{kac} which we briefly recall the key ideas.  

The heat kernel on $\R^2$ can be explicitly computed to be 
$$H_E (x,y, x', y', t) = \frac{1}{4 \pi t} e^{-|(x,y) - (x', y')|^2 / 4t}.$$
Integrating along the diagonal $x=x', y=y'$ over a bounded region $R \subset \R^2$ gives 
$$\int_R \frac{1}{4 \pi t} dx dy = \frac{|R|}{4 \pi t}.$$
At each interior point there is a neighborhood which does not intersect the boundary, and on this neighborhood the heat kernel for $\Omega$ is ``close'' to the Euclidean heat kernel $H_E$ for short times.  Kac referred to this as ``not feeling the boundary'' \cite{kac}.  He showed that the heat trace for a planar domain is asymptotic to the trace over the domain of the Euclidean heat kernel, 
$$\sum_{k=1} ^\infty e^{-\lambda_k t} \sim \int_{\Omega} H_E (x, y, x, y, t) dx dy = \frac{|\Omega|}{4 \pi t}.$$
To compute the next term in the asymptotic expansion he considered the behavior at the boundary both near and away from the corners.  The principle of ``not feeling the boundary'' turns out to be a general phenomenon which we call a ``locality principle.''  The locality principle is that if one cuts a domain into neighborhoods and uses a model heat kernel for each neighborhood (with the correct boundary condition if the neighborhood intersects the boundary), then the heat kernel for the domain $\Omega$ integrated over each neighborhood is asymptotically equal as $t \downarrow 0$ to the model heat kernel integrated over the same neighborhood.  More precisely, the heat trace for a locally constructed parametrix is asymptotically equal to the actual heat trace over $\Omega$ as $t \downarrow 0$.  In the case of a polygonal domain there are three types of neighborhoods and three model heat kernels:  the Euclidean heat kernel for interior neighborhoods, the heat kernel for a half plane for edge neighborhoods away from the vertices, and the heat kernel for a circular sector of opening angle equal to that of the opening angle at a vertex.  Kac used the explicit formulae for these model heat kernels and the locality principle to show\footnote{Kac did not compute the closed formula we have here, which is due to Dan Ray (unpublished) and Fedosov (in Russian) \cite{fed} and appeared in \cite{ms}; a particularly transparent proof is in \cite{vdb}.} that for a convex $n$-gon $\Omega$ with interior angles $\{\alpha_i\}_{i=1} ^n$, 
\begin{equation} \label{heat} \sum_{k=1} ^\infty e^{-\lambda_k t} \sim \frac{|\Omega|}{4 \pi t} - \frac{| \pa \Omega|}{8 \sqrt{\pi t}} + \sum_{i=1} ^n \frac{\pi^2 - \alpha_i ^2}{24 \pi \alpha_i}  \quad t \downarrow 0. \end{equation} 
This shows that the area, the perimeter, and the sum over the angles of $\frac{\pi^2 - \alpha^2}{24 \pi \alpha}$ are all spectral invariants.   

\subsubsection{The wave trace} 
The spectrum also determines the \em wave trace.  \em  Imagine a convex $n$-gon is a billiard table.  The set of closed geodesics is precisely the set of all paths along which a billiard ball hit with a pool cue could roll, such that the ball returns to its starting point.  The set of lengths of closed geodesics, which is known as \em the length spectrum, \em is related to the (Laplace) spectrum by a deep result proven by  Duistermaat and Guillemin \cite{dg} in the late 1970s.  The wave trace is a tempered distribution defined by  
$$\sum_{k=1} ^\infty e^{i \sqrt{\lambda_k} t}.$$
Duistermaat and Guillemin proved in [ibid] that the wave trace has singularities precisely at times t equal to the lengths of closed geodesics.  Although their result was for closed Riemannian manifolds, it holds analogously for polygonal domains as shown by F.~G.~Friedlander \cite{fried}.  If we know the entire spectrum $\{\lambda_k\}_{k=1} ^\infty$, then we know the wave trace and therefore the times at which it is singular.  This means that the (Laplace) spectrum determines the length spectrum.  It follows that the set of lengths of closed geodesics is a spectral invariant. As we shall see in \S~\ref{s3}, it can be significantly more complicated to extract geometric information from the wave trace as compared with the heat trace.  For this reason the wave trace is often considered a more subtle spectral invariant than the heat trace. 


\subsection{Algebraic techniques}  
Shortly after Durso completed her Ph.~D. thesis \cite{dur}, Chang and De~Turck published the following isospectrality result for triangular domains.  

\begin{theorem}[Chang and De Turck \cite{cd}] Let $T_0$ be a Euclidean triangle.  There is an integer $N$ which depends only on the first two eigenvalues of $T_0$ such that if $T_1$ is another triangle whose first $N$ eigenvalues coincide with those of $T_0$, then all the eigenvalues coincide.  
\end{theorem}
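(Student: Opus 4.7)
My plan is to argue by contradiction, combining the compactness guaranteed by Proposition~\ref{pr-a} with Durso's theorem that the map $\Lambda$ is injective on the moduli space of triangles. Suppose no such $N$ exists. Then for each positive integer $N$ there is a triangle $T_N$, not congruent to $T_0$, whose first $N$ eigenvalues match those of $T_0$ but which is not isospectral to $T_0$. Write $\mathcal{S}_N := \{T : \lambda_j(T) = \lambda_j(T_0),\ 1\leq j\leq N\}$ for the corresponding level sets in the moduli space of triangles taken modulo rigid motions, so that each $T_N$ lies in $\mathcal{S}_N \subseteq \mathcal{S}_2$.

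Next I would verify that $\mathcal{S}_2$ is compact in the moduli space. Proposition~\ref{pr-a} forces the diameters and in-radii of all triangles in $\mathcal{S}_2$ into a fixed compact subinterval of $(0,\infty)$, and closedness follows from continuity of $\lambda_1$ and $\lambda_2$ via the variational principle~\eqref{rk}. Passing to a convergent subsequence, I may assume $T_{N_k}\to T_\infty$ in the moduli space. Fixing $j$ and restricting to $N_k\geq j$, continuity yields $\lambda_j(T_\infty)=\lim_k\lambda_j(T_{N_k})=\lambda_j(T_0)$ for every $j$, so $T_\infty$ is isospectral to $T_0$, and Durso's theorem identifies $T_\infty$ with $T_0$ in the moduli space.

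At this point I have $T_{N_k}\to T_0$ with each $T_{N_k}\neq T_0$, and the task is to rule this out. For triangles at a definite distance from $T_0$ a pure compactness argument suffices: the open sets $U_j=\{T\in\mathcal{S}_2 : \lambda_j(T)\neq\lambda_j(T_0)\}$ cover $\mathcal{S}_2\setminus\{T_0\}$ by Durso, so for any fixed $\varepsilon>0$ they admit a finite subcover of the compact set $\mathcal{S}_2\setminus B_\varepsilon(T_0)$, producing finitely many eigenvalues that separate $T_0$ from every triangle outside an $\varepsilon$-neighborhood. The main obstacle is the \emph{local} behavior near $T_0$: one must show that some finite collection of eigenvalues also separates $T_0$ from nearby triangles in the three-dimensional moduli space. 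Here I would invoke real-analytic dependence of each $\lambda_j$ on the triangle parameters (say, two side lengths and the enclosed angle), so that each $\mathcal{S}_N$ is a real-analytic subvariety of the compact set $\mathcal{S}_2$ and the descending chain $\mathcal{S}_2\supseteq\mathcal{S}_3\supseteq\cdots$ with total intersection $\{T_0\}$ must stabilize after finitely many steps. The resulting $N$ depends only on $\mathcal{S}_2$, and hence only on $\lambda_1(T_0)$ and $\lambda_2(T_0)$, which is the content of the theorem. The hardest technical input is precisely this stabilization step, since one needs either a Noetherian-type property for real-analytic subvarieties on compact sets or a transversality argument verifying that the eigenvalue map $T\mapsto(\lambda_1,\lambda_2,\lambda_3)(T)$ has a nondegenerate differential at $T_0$.
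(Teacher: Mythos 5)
You should first note that the paper does not prove this statement at all: it is quoted from Chang and De~Turck \cite{cd}, and the paper only summarizes the proof in one sentence as resting on (i) real-analytic dependence of the metric on finitely many parameters, (ii) the resulting real-analytic dependence of certain \emph{symmetric functions} of the eigenvalues, and (iii) the Noetherian property of the ring of germs of real-analytic functions of finitely many variables. Your sketch does land on essentially this strategy---a descending chain of analytic level sets over a compact parameter region that must stabilize---so the core idea is the right one, and your compactness reduction via Proposition~\ref{pr-a} is in the same spirit as restricting to a compact set $K$ before invoking Theorem~\ref{th-cd}.

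That said, there are three genuine gaps. First, you invoke ``real-analytic dependence of each $\lambda_j$ on the triangle parameters''; this fails wherever eigenvalues cross, since the individual ordered eigenvalues are then only continuous in the parameters. This is precisely why \cite{cd} works with symmetric functions of the eigenvalues rather than the eigenvalues themselves, and repairing your definition of the sets $\mathcal{S}_N$ to use such symmetric functions is not cosmetic---it is what makes them real-analytic varieties at all. Second, the stabilization of the chain $\mathcal{S}_2 \supseteq \mathcal{S}_3 \supseteq \cdots$, which you correctly flag as the hardest step, \emph{is} the content of the theorem; leaving it as ``either a Noetherian-type property on compact sets or a transversality argument'' is not a proof, and the transversality alternative (nondegeneracy of $T \mapsto (\lambda_1,\lambda_2,\lambda_3)(T)$ at $T_0$) is essentially the open Antunes--Freitas circle of questions, not an available tool. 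Third, your chain is anchored at the fixed triangle $T_0$, so the stabilization index you would obtain a priori depends on the full spectrum of $T_0$, whereas the theorem asserts that $N$ depends only on $\lambda_1(T_0)$ and $\lambda_2(T_0)$; to get that uniformity one must run the Noetherian argument for pairs of parameter values over $K \times K$, as in Theorem~\ref{th-cd}, not for a single descending chain. (Your appeal to Durso's theorem \cite{dur} and the finite subcover away from $T_0$ are harmless but unnecessary: the conclusion is only that the full spectra coincide, which the stabilized chain gives directly.)
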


 Triangular domains can be parametrized to depend analytically on three parameters.  The above theorem is an application of the following more general result  for families of Riemannian metrics depending analytically on finitely many parameters.  

\begin{theorem}[Chang and De Turck \cite{cd}]\label{th-cd}  Let $D$ be a compact oriented manifold (with or without boundary) of dimension $n$.  We consider a family of metrics $g(\eps)$, depending analytically on the parameter $\eps \in \R^p$.  Let $\lambda_k (\eps)$ denote the $k^{th}$ eigenvalue of the Laplacian of $g(\eps)$ on $D$, and if $\pa D \neq \emptyset$ we assume that $\pa D$ is piecewise smooth and we impose the Dirichlet boundary condition.  We also let $\sigma(\eps)$ denote the spectrum of the Laplacian of $g(\eps)$.  Under these assumptions, for each compact subset $K \subset \R^p$ there is an integer $N = N(K)$ such that if $\eps_0$, $\eps_1 \in K$ and $\lambda_j (\eps_0) = \lambda_j (\eps_1)$ for all $j=1, \ldots, N$ then $\sigma(\eps_0) = \sigma(\eps_1)$.  In other words, for $\eps \in K$ the entire spectrum of the Laplacian of $g(\eps)$ is determined by the values of the first $N(K)$ eigenvalues.  
\end{theorem}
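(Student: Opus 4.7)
My plan is to prove the theorem by combining Rellich--Kato analytic perturbation theory with a Noetherian property for real-analytic (or semi-analytic) subsets of compact sets in $\R^{2p}$. The strategy is to view the agreement of the first $k$ eigenvalues on $K\times K$ as a decreasing nested family of analytic varieties, and to exploit the descending chain condition to force stabilization.

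First I would invoke Rellich--Kato perturbation theory for the analytic family $\epsilon\mapsto\Delta_{g(\epsilon)}$. Because $g(\epsilon)$ depends real-analytically on $\epsilon\in\R^p$ and the Laplacian has compact resolvent (with Dirichlet boundary condition if $\partial D\neq\emptyset$), the eigenvalues, listed with multiplicity and locally re-labelled, can be organized into real-analytic branches in $\epsilon$. The eigenvalues ordered by magnitude $\lambda_1(\epsilon)\le\lambda_2(\epsilon)\le\cdots$ are then continuous and semi-analytic in $\epsilon$ on any open set in $\R^p$; they fail to be globally analytic only at crossings of branches, which form a thin semi-analytic set.

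Next, for each $k\in\N$ I would define the closed semi-analytic subset
\[
A_k:=\bigl\{(\epsilon_0,\epsilon_1)\in K\times K : \lambda_j(\epsilon_0)=\lambda_j(\epsilon_1)\ \text{for all }1\le j\le k\bigr\}.
\]
The sequence $A_1\supseteq A_2\supseteq\cdots$ is manifestly nested, and its intersection $A_\infty=\bigcap_{k\ge 1}A_k$ is exactly the set of pairs with identical full spectra. The core step is then to invoke the Noetherian property for (germs of) real-analytic, or more generally semi-analytic, sets inside the compact neighborhood $K\times K$, a consequence of Oka coherence together with Lojasiewicz's stratification theorem: any decreasing sequence of closed semi-analytic subsets of a compact set stabilizes. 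This yields an integer $N=N(K)$ with $A_N=A_{N+1}=\cdots=A_\infty$. Equivalently, any $(\epsilon_0,\epsilon_1)\in K\times K$ with $\lambda_j(\epsilon_0)=\lambda_j(\epsilon_1)$ for $1\le j\le N$ already satisfies $\sigma(\epsilon_0)=\sigma(\epsilon_1)$.

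The main obstacle I anticipate is the geometric-measure-theoretic subtlety that the ordered eigenvalues are only piecewise analytic rather than globally analytic, so the sets $A_k$ are a priori semi-analytic rather than analytic. One must argue that this weaker regularity still suffices for the descending chain property on compact sets; the cleanest way is either to reduce to the analytic branches produced by Kato's theorem and take a finite union over local reparametrizations, or to apply Lojasiewicz's theorem on semi-analytic sets directly. A secondary, technical point is justifying the analytic dependence of the Dirichlet Laplacian when $\partial D$ is only piecewise smooth; this is handled by conjugating to a fixed reference domain via analytic diffeomorphisms so that the family becomes an analytic family of operators on a fixed domain, at which point Kato's theorem applies verbatim.
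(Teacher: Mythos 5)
The paper does not actually prove this theorem: it is imported from Chang and De~Turck \cite{cd}, with only their one-sentence description of the ingredients (``real-analytic dependence of certain \emph{symmetric functions} of the eigenvalues'' plus Noetherianity of the ring of germs of real-analytic functions). So your sketch has to stand on its own, and as written it has two genuine gaps, both sitting exactly where that quoted sentence points. The first is your appeal to Rellich--Kato to organize the eigenvalues into real-analytic branches: that theorem is strictly one-parameter. For $p\geq 2$ analytic eigenvalue branches need not exist at all --- the $2\times 2$ symmetric matrix with rows $(x,y)$ and $(y,-x)$ has eigenvalues $\pm\sqrt{x^2+y^2}$, which are not analytic (not even $C^1$) at the crossing. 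So the ordered eigenvalues $\lambda_j(\eps)$ are merely continuous and subanalytic, and your sets $A_k$ are at best semi-analytic, with no finite collection of analytic branches to fall back on.

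The second gap is fatal to the core step: closed semi-analytic subsets of a compact set do \emph{not} satisfy the descending chain condition. The intervals $A_k=\{x\in[0,1]: x\leq 1/k\}$ are semi-analytic, nested, and never stabilize; \L{}ojasiewicz's stratification theorem gives local finiteness of strata for a \emph{fixed} semi-analytic set, not stabilization of an infinite descending chain. The Noetherian phenomenon you need is a statement about common zero sets of genuinely real-analytic functions (via Noetherianity of the ring of germs), and it does not survive the passage to semi-analytic sets. The repair is precisely the ingredient Chang and De~Turck name: replace the ordered eigenvalues by symmetric functions of the whole spectrum (e.g.\ the heat trace $\sum_k e^{-t\lambda_k(\eps)}$ or zeta values $\sum_k \lambda_k(\eps)^{-s}$), which are blind to the labelling at crossings and hence honestly real-analytic in $\eps$; the locus in $K\times K$ where two full spectra agree is then a real-analytic variety cut out by countably many analytic equations, and Noetherianity legitimately reduces these to finitely many. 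Even then there is a residual step your sketch does not address: converting agreement of finitely many symmetric functions into the hypothesis actually stated, namely agreement of the first $N$ \emph{ordered} eigenvalues.
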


From \cite{cd} we quote, ``The key ingredients in the proof are the assumption of real-analytic dependence of the metric on the parameters, and the resulting real-analytic dependence of certain symmetric functions of the eigenvalues, and finally the fact that the ring of germs of real analytic functions of finitely many variables is Noetherian.'' 

In order to use the above result, we shall prove the following.  

\begin{prop}\label{propcd}  For each $n\geq 3$ let $\M_n$ denote the set of all convex $n$-gons.  Then $\M_n$ can be identified with a family of Riemannian metrics on the unit disk which depend real analytically on finitely many parameters.  \end{prop}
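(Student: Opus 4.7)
The plan is to realize $\M_n$, modulo rigid motions of $\R^2$, as an open subset of some $\R^p$, and then to construct, for each parameter $\eps$ in this open set, a homeomorphism $F_\eps : \overline{\D} \to \overline{P_\eps}$ whose components depend real analytically on $\eps$. Pulling back the Euclidean metric via $F_\eps$ yields a real analytic family of metrics $g(\eps) := F_\eps^\ast g_{\mathrm{Eucl}}$ on $\overline{\D}$ whose Dirichlet Laplacian is isospectral to that of $P_\eps$, which is exactly what the proposition demands.

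First, I would parametrize the moduli space. Normalizing $v_1 = (0,0)$ and placing $v_2$ on the positive $x$-axis fixes the action of rigid motions, leaving $2n-3$ real parameters, namely $\ell_1 = |v_2|>0$ together with the remaining vertex coordinates $v_3, \ldots, v_n \in \R^2$. The subset $U \subset \R^{2n-3}$ corresponding to genuine convex $n$-gons (all interior angles strictly in $(0,\pi)$, vertices in cyclic order, boundary simple) is open, and the assignment $\eps \mapsto (v_1(\eps), \ldots, v_n(\eps))$ is polynomial and therefore real analytic.

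Second, I would build the identification with the disk by a triangulation strategy. Fix once and for all a piecewise smooth homeomorphism $\Psi : \overline{\D} \to \overline{R_n}$ of the unit disk onto a reference regular $n$-gon $R_n$, smooth away from the preimages of the vertices; the radial map $r e^{i\theta} \mapsto r \rho_0(\theta) e^{i\theta}$, with $\rho_0$ the polar radius of $\partial R_n$, is one explicit choice. For each $\eps \in U$, triangulate $P_\eps$ into $n$ triangles $T_i^\eps$ by joining the centroid $c(\eps) = \frac{1}{n} \sum_j v_j(\eps)$ to each pair of consecutive vertices, and analogously triangulate $R_n$ into $T_i^0$. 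Let $\Phi_\eps : \overline{R_n} \to \overline{P_\eps}$ be the unique piecewise affine map sending $T_i^0$ affinely to $T_i^\eps$ with matching labelled vertices. The coefficients of each affine piece are rational in $\eps$ with nonvanishing denominators on $U$ (the triangle areas never degenerate), hence real analytic. Setting $F_\eps := \Phi_\eps \circ \Psi$ gives the required family, and the matrix entries of $g(\eps)$ in any fixed local chart on $\overline{\D}$ are real analytic in $\eps$.

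The main obstacle is that the constructed $\Phi_\eps$ is only piecewise affine, so $g(\eps)$ is only piecewise smooth on $\overline{\D}$, with possible jumps in first derivatives across the $n$ radial seams corresponding to the vertex directions. Within each of the $n$ sectors the coefficients of $g(\eps)$ depend genuinely real analytically on $\eps$, so the family fits the analytic hypothesis of Theorem~\ref{th-cd}. The remaining subtlety is to verify that the piecewise-smooth regularity tolerated by that theorem also covers these interior stratified seams, which either follows from a minor extension of Chang and De Turck's argument to stratified-smooth metrics or can be sidestepped by replacing $\Phi_\eps$ by its convolution with an $\eps$-independent bump supported in a thin neighborhood of the seams, yielding a genuinely smooth family of diffeomorphisms with the same real analytic parameter dependence.
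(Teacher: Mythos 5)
Your first step---normalizing away rigid motions and exhibiting $\M_n$ as an open subset of $\R^{2n-3}$ on which the vertices depend real analytically---is fine and matches the paper's dimension count. The genuine gap is in the second step. The Jacobian of a piecewise affine homeomorphism is piecewise \emph{constant} and jumps across the seams, so the pulled-back tensor $g(\eps)=F_\eps^{\ast}g_{\mathrm{Eucl}}$ is not ``piecewise smooth with jumps in first derivatives'': the metric coefficients themselves are discontinuous along the $n$ interior seams (only the quadratic form evaluated on vectors tangent to a seam agrees from the two sides, since the two affine pieces coincide on the seam direction alone). Theorem~\ref{th-cd} is stated for honest Riemannian metrics on a compact manifold with piecewise smooth \emph{boundary}; interior discontinuities of the coefficients are outside its hypotheses, and the ``minor extension'' you invoke is precisely the content that would need to be proved. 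Your mollification fix does not close this as stated either: the seams run from the centroid to the vertices of $R_n$, so every neighborhood of the seams meets $\partial\D$, and convolving there destroys the exact boundary correspondence $F_\eps(\partial\D)=\partial P_\eps$ on which the isospectrality of $(\D,g(\eps))$ with $P_\eps$ rests; mollifying only away from the boundary leaves the discontinuity intact near each vertex.

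The paper avoids all of this by taking $F_\eps$ to be the Schwarz--Christoffel conformal map $f_\eps:\D\to P_\eps$, given by the explicit integral \eqref{scmap}. Holomorphy in the open disk makes the pulled-back metric a single conformal factor $|f_\eps'|^2$ times the flat metric, real analytic in the interior, with degeneracies only at the finitely many boundary prevertices---exactly the sort of boundary singularity (the polygon's own corners) that the piecewise-smooth-boundary framework of Chang and De~Turck is built to tolerate. The parameters there are the $n-1$ angles, $n-3$ side lengths, and a scale, again $2n-3$ in number, with analytic dependence read off from the integrand. If you want to salvage your triangulation route, you would need either a version of Theorem~\ref{th-cd} valid for uniformly elliptic $L^\infty$ families (arguing through the quadratic forms and analytic perturbation theory of type (B) families), or a smoothing of $\Phi_\eps$ that fixes $\partial R_n$ pointwise and remains a diffeomorphism; neither is supplied in your write-up.
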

 
\begin{proof}
Let $\D$ denote the unit disk.    One of the most beautiful theorems in complex analysis is the Uniformization Theorem which states that all simply connected bounded domains in the plane are conformally equivalent to the disk.  For polygonal domains, there is an explicit formula for the conformal map known as the Schwarz-Christoffel formula.  Let 
\begin{equation} \label{scmap} f(z) := \int_0 ^z \prod_{i=1} ^n (w-w_i)^{\alpha_i - \pi} dw: z \in \D \to f(z) \in P.\end{equation} 
This function is a conformal map from the disk $\D$ to the polygon $P$ with interior angles $\{\alpha_i\}_{i=1}^n$ and vertices $p_i = f(w_i)$, where the points $w_i$ lie on the boundary of the unit disk. Let us fix points $w_1$ and $w_2$ in $\pa \D$ such that the length of the shortest side of $P$ is $|p_1 - p_2| = |f(w_1) - f(w_2)| = 1$.  By Theorem 3.1 of \cite{sc}, the $n-1$ angles $\{ \alpha_i \}_{i=1} ^{n-1}$ together with the $n-3$ side lengths 
$$|p_i - p_{i+1}| , \quad i=2, \ldots , n-1,$$
uniquely determine $P$ under the assumption that the shortest side length of $P$ is equal to one.  Moreover, since the points $p_1$ and $p_2$ and the angles are fixed, the side lengths $|p_i - p_{i+1}|$ uniquely determine the location of the points $w_3$, $\ldots$, $w_n \in \pa \D$.  We therefore define   
$$f_\epsilon (z) : = c f(z), \quad \epsilon := (\alpha_1, \ldots, \alpha_{n-1}, p_3, \ldots, p_n, c) \in \R^{2n-3}.$$
This function is holomorphic in $\D$, piecewise holomorphic on $\pa \D$, and continuous up to $\pa \D$.  
We consider the family of metrics $g(\epsilon)$ on $\D$, where $g(\epsilon)$ is the pull-back of the Euclidean metric on $P$ with respect to the function $f_\eps$.  Then $P$ with the standard Euclidean metric is equivalent to $\D$ with the metric $g(\epsilon)$, and the spectrum of the Euclidean Laplacian on $P$ with Dirichlet boundary condition is identical to the spectrum of the Laplacian with respect to the metric $g(\epsilon)$ on $\D$ with Dirichlet boundary condition.  
\end{proof}

\section{Hearing quadrilaterals}\label{s3}
With the techniques introduced in the last section, we shall first prove that one can hear the shape of a parallelogram.  

\begin{proof}[Proof of hearing a parallelogram]
The length of the parallelogram is the length of its longer side, $L$, and the width $W$ is the length of the adjacent side.  So, 
$$W \leq L,$$
and the perimeter 
$$P = 2(L+W).$$
If the height of the parallelogram is $h$, then the area 
$$A = Lh.$$
The parallelogram has four interior angles, and the smallest has measure $\alpha \leq \frac{\pi}{2}$.  The other angles have measure $\pi - \alpha$.  So, the constant term in the short time asymptotic expansion of the heat trace ~\eqref{heat} is 
$$a_0 = \frac{\pi^2}{12 \alpha (\pi - \alpha)} - \frac{1}{12}.$$
Let's consider the function 
$$f(\alpha) = \frac{1}{\alpha (\pi - \alpha)},$$
and its derivative 
$$f'(\alpha) = \frac{2\alpha - \pi}{(\pi \alpha - \alpha^2)^2} = 0 \iff \alpha = \frac{\pi}{2}.$$
So we see that $f$ is injective on $(0, \pi/2)$ which shows that the angle $\alpha$ is uniquely determined by $a_0$ which in turn is uniquely determined by the spectrum.  

\begin{figure} \caption{A parallelogram} \label{par} \includegraphics[width=200pt]{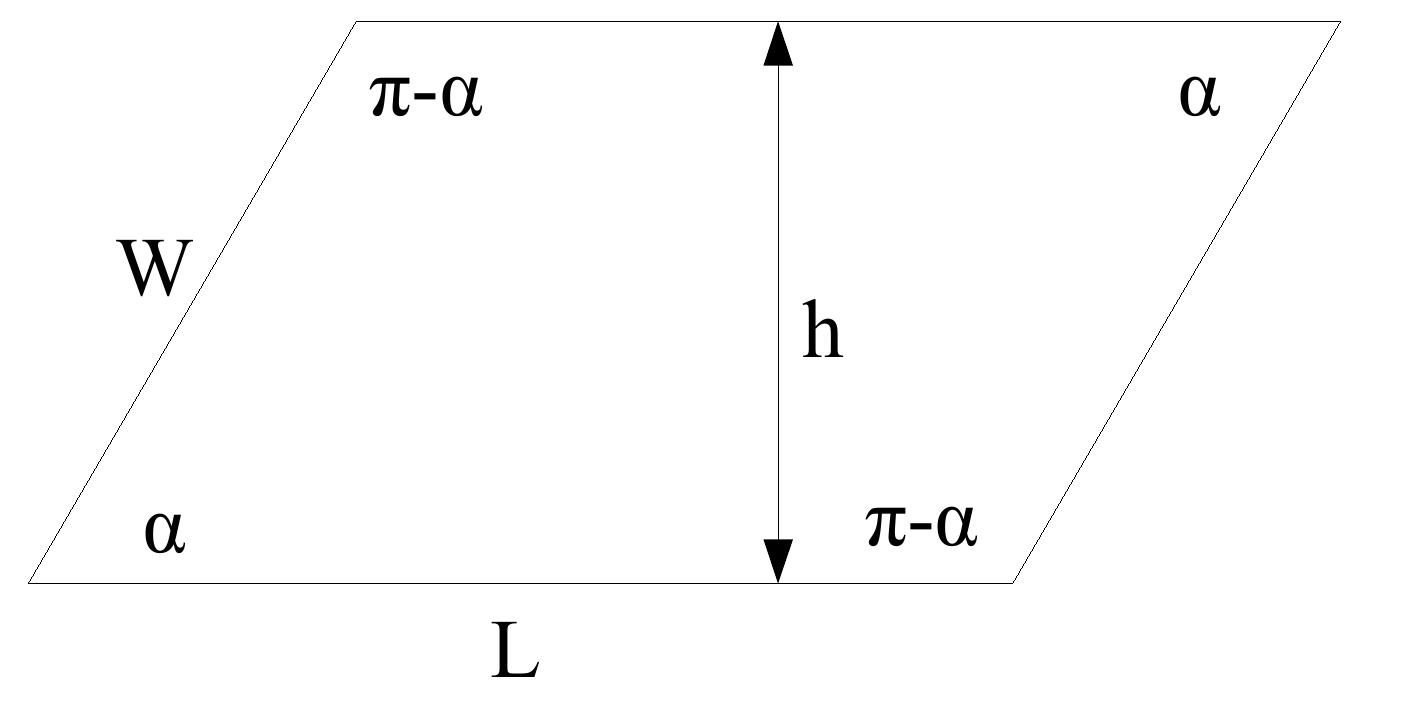} \end{figure} 

By elementary geometry, 
\begin{equation} \label{whl} W = \frac{h}{ \sin \alpha} \implies L = \frac{P}{2} - \frac{h}{ \sin \alpha},\quad A = h \left(\frac{P}{2} - \frac{h}{ \sin \alpha} \right). \end{equation} 
This is a quadratic expression for $h$, so we can use the quadratic formula to solve for $h$
$$h = \frac{P\sin\alpha}{4} \pm \sin\alpha \frac{\sqrt{\frac{P^2}{4} - \frac{4A}{\sin \alpha}}}{2}.$$
Well, there are two possible solutions, so to determine which is correct, let's think about the length and the width.  Multiplying the equation 
$$L = \frac{P}{2} - \frac{h}{\sin \alpha}$$
by $\frac{\sin\alpha}{2}$, 
$$\frac{L \sin \alpha}{2} = \frac{P \sin \alpha}{4} - \frac{h}{2} \implies \frac{P \sin \alpha}{4} = \frac{h}{2} + \frac{L\sin\alpha}{2} \geq h.$$
This shows that the only solution consistent with the geometry is 
$$h = \frac{P\sin\alpha}{4} - \sin\alpha \frac{\sqrt{\frac{P^2}{4} - \frac{4A}{\sin \alpha}}}{2}.$$
The spectrum uniquely determines the heat trace which determines $P$, $A$, $a_0$, and $\alpha$, and these uniquely determine $h$.  By ~\eqref{whl}, $h$ and $\alpha$ uniquely determine $W$ and $L$, so we see that if two parallelograms have identical spectrum, then they are congruent.  
\end{proof} 

 \begin{remark}  Durso proved that isospectral triangles are congruent using both  the heat   and the wave traces~\cite{dur}.  Grieser and Maronna gave a more elementary proof using only the heat trace in \cite{gm}.  
\end{remark} 

\begin{ex} Use the first two terms in ~\eqref{heat} together with the fact that the spectrum determines the wave trace and hence the length of the shortest closed geodesic to give a shorter proof that one can hear the shape of a parallelogram.  
\end{ex}  

\begin{defn}\label{dtrap}  An \em acute trapezoid \em is a convex quadrilateral which has two parallel sides of lengths $b$ and $B$ with $B \geq b$, and two non-parallel sides known as \em legs \em of lengths $\ell$ and $\ell'$.  The side of length $B$ is the \em base, \em and the angles at the base, $\alpha$ and $\beta$ satisfy 
$$\alpha + \beta < \frac{\pi}{2}.$$ 
\end{defn}

\begin{figure}\caption{An acute trapezoid} \label{trap} \includegraphics[width=200pt]{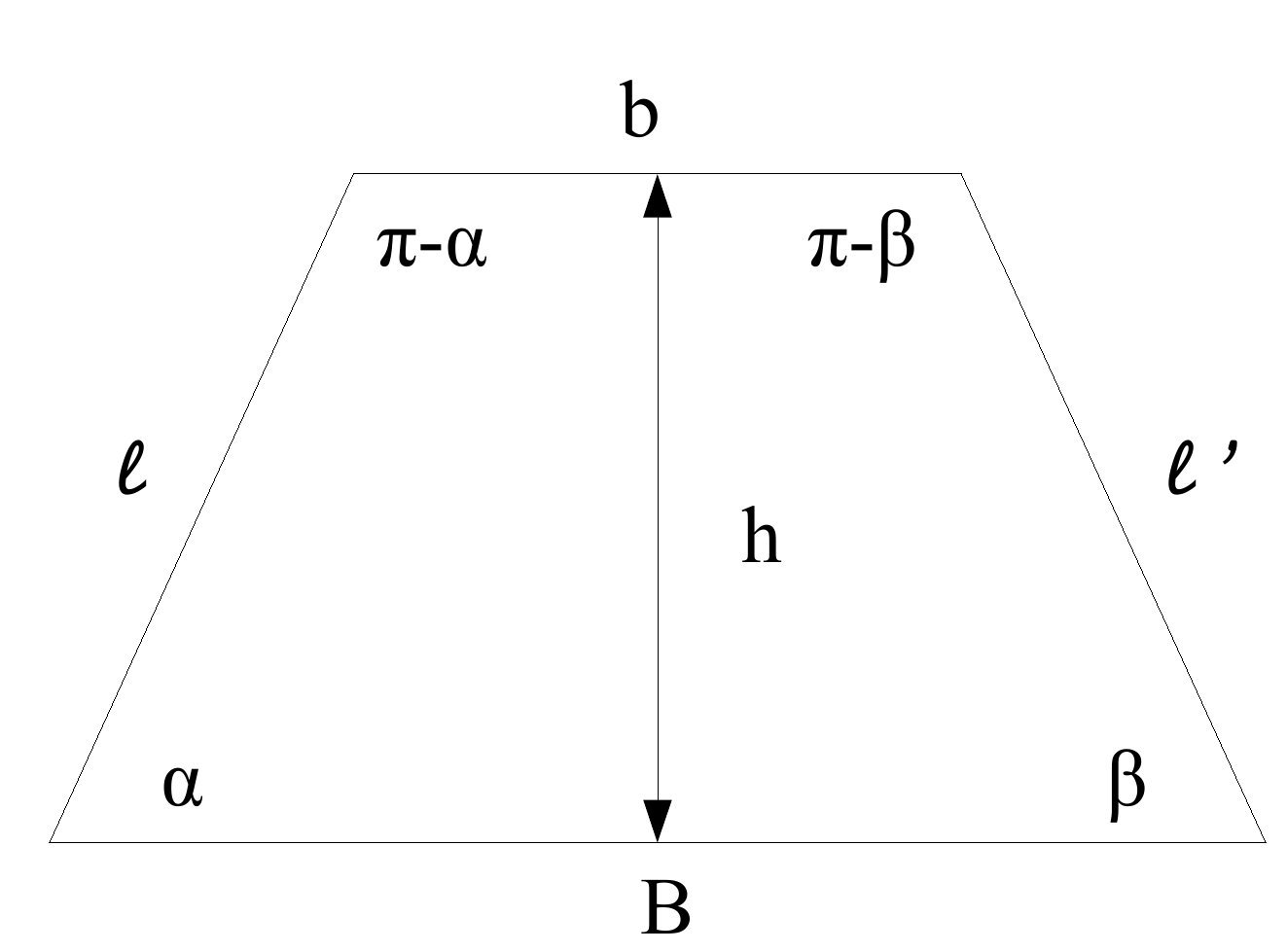} \end{figure} 

The heat trace ~\eqref{heat} together with the length of the shortest closed geodesic are determined by the spectrum.  We shall prove Theorem ~\ref{t1} by showing that the first three coefficients in the heat trace ~\eqref{heat} together with the length of the shortest closed geodesic are sufficient to uniquely determine an acute trapezoid.  What is the length of the shortest closed geodesic?    

\begin{lemma} 
 The length of the shortest closed geodesic of an acute trapezoid is twice the height, that is twice the distance from the base to the opposite parallel side.  
\end{lemma}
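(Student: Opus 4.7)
For the upper bound, pick any $x_0$ in the (non-degenerate, since $b>0$) orthogonal projection of the top onto the base; by convexity the vertical segment from $(x_0,0)$ to $(x_0,h)$ lies entirely in $T$, and traversing it up and back is a period-$2$ closed billiard orbit with perpendicular bounces, of length exactly $2h$. So the shortest closed geodesic has length at most $2h$.

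For the lower bound, let $\gamma:[0,L]\to T$ be any closed geodesic parameterized by arc length, and let $y(t)$ be its height. Since $|\dot\gamma|=1$ we have $|\dot y|\le 1$ almost everywhere, so
\[
L = \int_0^L 1\,dt \;\geq\; \int_0^L |\dot y(t)|\,dt,
\]
and the right-hand side is the total vertical variation of $\gamma$ over one period. If $\gamma$ attains both $y=0$ and $y=h$, this variation is at least $2h$ and the lower bound follows; so the real content is the geometric claim that every closed geodesic must hit both parallel sides, which I would prove by contradiction.

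Suppose a closed geodesic $\gamma$ avoids the top (the symmetric case of avoiding the base is handled by unfolding in the other direction). If $\gamma$ also avoided the base, it would bounce only on the two legs; extended upward they meet at an apex $V$ above $T$ in a wedge of opening angle $\pi-\alpha-\beta$, which by the acute hypothesis $\alpha+\beta<\pi/2$ strictly exceeds $\pi/2$. A classical result on wedge billiards asserts that every trajectory in a wedge of opening angle $\theta\in(0,\pi)$ escapes after at most $\lceil\pi/\theta\rceil$ bounces, so no closed orbit can survive inside such a wedge, contradicting the fact that $\gamma$ is trapped in the bounded trapezoid.

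In the remaining sub-case, $\gamma$ meets the base but not the top. Here I would unfold $\gamma$ across the base: reflecting $T$ to $T'$ and gluing along the base produces a hexagon $H=T\cup T'$, and the doubled orbit $\tilde\gamma\subset H$ is a closed billiard trajectory bouncing only on the four slanted legs of $H$ and trapped in the horizontal slab $|y|<h$. The four legs meet in wedges of openings $2\alpha$ and $2\beta$ at the two base corners, with $2(\alpha+\beta)<\pi$, and an analysis of the reflection group generated by the four leg-reflections, together with the constraint that $\tilde\gamma$ cannot leave the slab, shows the acute hypothesis rules out such a closed orbit. The main obstacle of the proof is precisely this sub-case: the wedge argument above is elementary, but excluding closed orbits that interlace base bounces with leg bounces requires a careful combination of unfolding, the acute condition $\alpha+\beta<\pi/2$, and either a group-theoretic or direct geometric analysis of the possible bounce sequences inside $H$.
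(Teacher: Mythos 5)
Your upper bound and the total-variation estimate $L=\int_0^L 1\,dt\geq\int_0^L|\dot y|\,dt\geq 2h$ for orbits that touch both parallel sides are fine, and the wedge argument disposing of orbits that avoid \emph{both} parallel sides is also correct. But your strategy hinges on the claim that every closed geodesic meets both the base and the top, and that claim is false, so the sub-case you defer (meets the base but not the top) cannot be completed. Concretely, take the isosceles acute trapezoid with base angles $\alpha=\beta=40^\circ$, height $h=1$, top side $b=0.1$, so $B=b+2h\cot\alpha\approx 2.48$. Let $P$ be the point on the left leg at distance $t=B\cos\alpha/2$ from the left base vertex and shoot perpendicularly off that leg into the trapezoid: a short computation (in your unfolded hexagon, $t+u=B\cos\alpha$) shows this ray crosses the base at $M=(B/2,0)$ with equal angles and hits the right leg perpendicularly at the mirror point $P''$, and $P,P''$ lie strictly below height $h$ precisely when $b<2h\tan\alpha$, which holds here. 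Then $P\to M\to P''\to M\to P$ is a genuine closed billiard trajectory of length $2B\sin\alpha$ that never touches the top side. It is no threat to the lemma itself, since $B\sin\alpha\geq 2h\cos\alpha>h$ for $\alpha<\pi/4$, but it refutes the statement your whole lower bound rests on; your "analysis of the reflection group" in the hexagon cannot succeed because the orbit it is supposed to exclude actually exists.

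The paper takes a different and shorter route that never needs to control \emph{all} closed orbits: it uses the classification of the \emph{shortest} closed billiard trajectory in a convex planar domain as either a two-bounce orbit between parallel supporting lines or an inscribed triangle. The only pair of parallel edges is the base and the top, so the two-bounce candidate has length $2h$; a triangular orbit would have to be the Fagnano-type orbit of the triangle cut out by the base and the two extended legs, and the paper rules this out because that triangle has apex angle $\pi-\alpha-\beta>\pi/2$ (equivalently, the required right-angled altitude-foot configuration is incompatible with $\alpha+\beta<\pi/2$). To salvage your approach you would need either to import that two-or-three-bounce classification — at which point you may as well argue as the paper does — or to prove the quantitative statement that every closed orbit avoiding the top has length at least $2h$; the example above shows this is not a non-existence statement and would require a separate length estimate.
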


\begin{proof} 
There are two possibilities for the length of the shortest closed geodesic.  It is either $2h$ which corresponds to closed geodesics joining the parallel sides, or it is the perimeter of a triangle contained in the trapezoid.  Using the laws of  optics, if the shortest closed geodesic is a triangle, then its vertices lie on the legs and the base, and there is a segment joining each vertex on a leg to the opposite vertex at the base such that the segment meets the leg in a right angle.  Well, it turns out that this is impossible.  Try drawing a triangle with one side equal to such a segment from a vertex at the base to the leg meeting in a right angle, for example in Figure ~\ref{trap} from the vertex with angle $\beta$ to the opposite leg.  The sum of the angles of a triangle is $\pi$, so since one of the angles is $\frac{\pi}{2}$, the other two angles must sum to $\frac{\pi}{2}$.  However, the angle in the triangle at the vertex on the base measures at most $\beta$.  Since $\alpha + \beta < \frac{\pi}{2}$, such a triangle is impossible!  So, the shortest closed geodesic cannot be a triangle contained in the trapezoid. 
\end{proof} 

Since the spectrum determines the wave trace and hence its singularities, the first positive singularity which is the length of the shortest closed geodesic, $2h$, is a spectral invariant.  What spectral invariants can we extract from the short time asymptotic expansion of the heat trace ~\eqref{heat}?  Well, the first two coefficients are the area $A$ and the perimeter $P$.  The area of a trapezoid with base $B$ and opposite parallel side of length $b$ is 
$$A = \frac{B+b}{2} h \implies B+b = \frac{2A}{h}.$$
The perimeter 
$$P = B+b + \ell + \ell',$$
where $\ell$, $\ell'$ are the lengths of the legs.  This shows that the spectrum uniquely determines $h$, $B+b$, and $\ell + \ell'$.  What about the angles?  Let $\alpha$ and $\beta$ denote the interior angles at the base $B$, assuming without loss of generality that $\alpha \geq \beta$.  Then 
$$\ell = h \csc \alpha, \quad \ell' = h \csc \beta, \quad \ell + \ell' =  h(\csc \alpha + \csc \beta),$$
so 
\begin{equation} \label{ell} \csc \alpha + \csc \beta = \frac{\ell + \ell'}{h}. \end{equation} 
Since the angles are $\alpha$, $\pi - \alpha$, $\beta$, and $\pi - \beta$, the constant term in  ~\eqref{heat} is 
\begin{equation} \label{aotrap} a_0 = \frac{\pi^2}{24} \left( \frac{1}{\alpha(\pi - \alpha)} + \frac{1}{\beta(\pi - \beta)} \right) - \frac{1}{12}. \end{equation} 
In the following lemma, we will prove that ~\eqref{ell} and ~\eqref{aotrap} uniquely determine the angles $\alpha$ and $\beta$.  This means that the spectrum uniquely determines the angles, the height, the area, and the perimeter which all together uniquely determine the acute trapezoid, up to rigid motions.  The proof of this lemma therefore completes the proof of Theorem ~\ref{t1} and also shows that the method of using both the heat and wave traces can become highly technical.  

\begin{lemma} Let $p,q$ be real numbers. Then the solution of the system of equations
\begin{equation} \label{array} 
\left\{
\begin{array}{l}
\csc(\alpha)+\csc(\beta)=p\\
(\alpha(\pi-\alpha))^{-1}+(\beta(\pi-\beta))^{-1}=q
\end{array}
\right.
\end{equation} 
if it exists, must be unique  for $0< \beta \leq \alpha \leq\pi/2$.
\end{lemma}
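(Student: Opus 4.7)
The plan is to change variables so both equations become symmetric in the new unknowns, and then to reduce uniqueness to the strict concavity of a single one-variable auxiliary function. Since $\csc$ is a strictly decreasing bijection $(0,\pi/2] \to [1,\infty)$, set $a := \csc\alpha$, $b := \csc\beta$, and define $F \colon [1,\infty) \to \R$ by
\[
F(\csc x) := \frac{1}{x(\pi-x)}, \qquad x \in (0,\pi/2].
\]
The constraint $0 < \beta \leq \alpha \leq \pi/2$ becomes $1 \leq a \leq b$, and the system \eqref{array} reads simply $a + b = p$ and $F(a) + F(b) = q$.

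The key structural observation is that if $F$ is strictly concave on $[1,\infty)$, then for each fixed $p$ the map $s \mapsto F(\tfrac{p}{2} - s) + F(\tfrac{p}{2} + s)$ has derivative $F'(\tfrac{p}{2} + s) - F'(\tfrac{p}{2} - s)$, which is strictly negative because $F'$ is strictly decreasing. Hence this map is strictly monotone in $s = (b-a)/2 \geq 0$, so the prescribed value $q$ uniquely determines $|b-a|$; combined with $a + b = p$ and the ordering $a \leq b$, this determines $(a,b)$ uniquely, and hence $(\alpha,\beta)$.

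What remains is strict concavity of $F$. Writing $\phi(x) = \csc x$ and $\psi(x) = 1/[x(\pi-x)]$, the chain rule yields
\[
F''(\phi(x)) = \frac{\psi''(x)\phi'(x) - \psi'(x)\phi''(x)}{\phi'(x)^{3}},
\]
and all four derivatives are known in closed form: $\phi' = -\csc x\cot x$, $\phi'' = \csc x(\cot^{2}x + \csc^{2}x)$, $\psi' = -(\pi-2x)/[x(\pi-x)]^{2}$, $\psi'' = 2[x(\pi-x) + (\pi-2x)^{2}]/[x(\pi-x)]^{3}$. Both products $\psi''\phi'$ and $\psi'\phi''$ are negative on $(0,\pi/2)$, so the sign of $F''$ does not follow from signs alone; after clearing positive denominators, $F'' < 0$ reduces to a single scalar inequality in $x \in (0,\pi/2)$. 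Endpoint asymptotics are reassuring: Taylor expansion near $x = \pi/2$ gives $F'(1) = 32/\pi^{4}$ and $F''(1) = -160/(3\pi^{4}) + 512/\pi^{6} < 0$, while $F(t) \sim t/\pi + 1/\pi^{2}$ as $t \to \infty$ yields $\lim_{t \to \infty} F'(t) = 1/\pi < F'(1)$, so $F'$ decreases overall and sample interior values at $x = \pi/3$ and $x = \pi/4$ support monotone decrease.

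The main obstacle is precisely promoting this from an endpoint and sample-point verification to a global statement on $(0, \pi/2)$. I expect to handle it either by an explicit algebraic factorization of the numerator of $\psi''\phi' - \psi'\phi''$ once everything is expressed as a polynomial in $x$, $\pi - x$, $\sin x$ and $\cos x$, or by partitioning $(0,\pi/2)$ into two subintervals and applying different elementary estimates on each — e.g.\ Taylor bounds on $\sin$ and $\cos$ near $0$, and expansion around $\pi/2$ on the complementary piece.
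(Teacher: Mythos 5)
Your reduction is valid and is in fact a cleaner framing than the one in the paper: setting $a=\csc\alpha$, $b=\csc\beta$ turns the system into $a+b=p$, $F(a)+F(b)=q$, and the symmetrization argument (fix $a+b$, vary $s=(b-a)/2$) correctly shows that strict concavity of $F$ on $[1,\infty)$ forces uniqueness. The paper instead solves the second equation for $\beta=\beta(\alpha)$ and shows $g(\alpha)=\csc\alpha+\csc\beta(\alpha)$ is injective, which requires an implicit-differentiation computation your change of variables avoids. But note that the two routes bottleneck at literally the same inequality: writing $\phi=\csc$, $\psi(x)=1/(x(\pi-x))$ and $f=-\phi'/\psi'$ (the paper's auxiliary function), one has $F'=-1/f$, hence $F''=f'/(f^2\phi')$, and since $\phi'<0$ your claim $F''<0$ is exactly $f'>0$, i.e.\ (because $f<0$ on $(0,\pi/2)$) exactly the paper's Claim that $(\log f)'<0$.

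That is where the genuine gap lies: you have not proven the concavity, only checked $F''(1)<0$, the asymptotic slope $1/\pi<F'(1)=32/\pi^4$, and two sample points, and then stated two candidate strategies. This inequality is the entire content of the lemma and it is delicate: for instance at $\alpha=\pi/4$ the quantity $\psi''\phi'-\psi'\phi''$ equals $\tfrac{\sqrt2}{27\pi^3}\bigl(1152-\tfrac{3584}{\pi}\bigr)$, a near-cancellation of terms of size $\sim 1150$ leaving about $11$; equivalently the paper's $u=f'/f$ satisfies $u(\pi/4)=\tfrac{28}{3\pi}-3\approx-0.029$, so the function you must show is negative comes within $0.03$ of zero. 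Crude Taylor bounds on subintervals are therefore at real risk of failing, and ``explicit algebraic factorization'' of $\psi''\phi'-\psi'\phi''$ is not available in any obvious form. The paper handles this by a structural trick you would still need: it computes the logarithmic derivative $u=\tfrac{2}{\pi-2\alpha}+\tfrac{2}{\alpha}+\tfrac{2}{\alpha-\pi}-2\cot\alpha-\tan\alpha$, proves $u''>0$ via a further auxiliary function $v(\alpha)=\alpha^{-3}-\cos\alpha/\sin^3\alpha$ and power-series estimates, and concludes $u<0$ from convexity together with $u(0)=u(\pi/2)=0$. Until you supply an argument of comparable strength for $F''<0$, the proof is incomplete.
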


\begin{proof}
First, let's use the second equation to show that each $\alpha$ uniquely determine a $\beta$.  Solving the second equation for $\beta$ in terms of $\alpha$ and $q$ leads to a quadratic equation in $\beta$, 
$$\beta^2 (1-q\alpha(\pi - \alpha)) + \beta (\pi(q\alpha (\pi - \alpha) -1)) - \alpha(\pi - \alpha) =0.$$
By the quadratic formula the solutions are 
$$\beta = \frac{\pi}{2} \pm \sqrt{ \frac{\pi^2}{4} + \frac{\alpha(\pi - \alpha)}{1-q\alpha(\pi - \alpha)}}.$$
Since the trapezoid is acute, $\beta \leq \frac{\pi}{2}$, so indeed $\alpha$ and $q$ uniquely determine  
\begin{equation} \label{eq-beta} 
\beta=\beta(\alpha) = \frac{\pi}{2} - \sqrt{ \frac{\pi^2}{4} + \frac{\alpha(\pi - \alpha)}{1-q\alpha(\pi - \alpha)}}.
\end{equation} 
We can prove the lemma if we prove that the function 
\begin{equation} \label{def-g} 
g(\alpha) := \csc(\alpha)+\csc(\beta(\alpha)) \end{equation} 
has unique solution $\alpha$ for any given $p$.  Analyzing this function directly is problematic, due to the presence of the unknown constant $q$ in the expression for $\beta$ ~\eqref{eq-beta}.  So, we'd like to relate the function $g$ to an explicit function of one variable.  A good way to get rid of unwanted, unknown constants is to differentiate.  Implicit differentiation in the second equation of ~\eqref{array} gives
$$\beta'(\alpha) = - \frac{((\alpha(\pi-\alpha))^{-1} )' }{((\beta(\pi-\beta))^{-1} )'},$$
and so the derivative
\begin{equation}\label{2}
g'(\alpha) = -\csc(\alpha)\cot(\alpha)+\csc(\beta(\alpha))\cot(\beta(\alpha))\cdot\frac{((\alpha(\pi-\alpha))^{-1})'}
{((\beta(\pi-\beta))^{-1})'}.
\end{equation}

Let's see if we can relate $g'(\alpha)$ to the following function  
\[
f(\alpha):=\frac{\csc(\alpha) \cot(\alpha)}{((\alpha(\pi-\alpha))^{-1})'} = \frac{\alpha^2(\pi-\alpha)^2\cos\alpha}{(2\alpha-\pi)\sin^2\alpha}.
\]

Since 
$$\frac{g'(\alpha)}{((\alpha(\pi-\alpha))^{-1})'} = - \frac{\csc(\alpha) \cot(\alpha)}{((\alpha(\pi-\alpha))^{-1})'} + \frac{\csc(\beta(\alpha)) \cot(\beta(\alpha))}{((\beta(\pi-\beta))^{-1})'},$$
we see that 
\begin{equation} \label{g'} g'(\alpha) = \frac{\pi - 2\alpha}{\alpha^2 (\pi - \alpha)^2} \left(f(\alpha)  -f(\beta) \right) \end{equation} 

Although 
$$f'(\alpha) = - \frac{2 (\pi - \alpha)^2 \alpha^2 \cot(\alpha) \csc(\alpha)}{(2\alpha - \pi)^2} + \frac{2 (\pi - \alpha)^2 \alpha \cot(\alpha) \csc(\alpha)}{2\alpha - \pi} $$
$$- \frac{2(\pi - \alpha) \alpha^2 \cot(\alpha) \csc(\alpha)}{2\alpha - \pi} - \frac{(\pi - \alpha)^2 \alpha^2 \cot^2 (\alpha) \csc(\alpha)}{2\alpha - \pi} - \frac{(\pi-\alpha)^2 \alpha^2 \csc^3 (\alpha)}{2\alpha - \pi},$$
it turns out that the logarithmic derivative of $f$ is pleasantly simple 
$$\frac{f'(\alpha)}{f(\alpha)} = \frac{2}{\pi - 2\alpha} + \frac{2}{\alpha} + \frac{2}{\alpha-\pi}  - 2 \cot (\alpha) - \tan (\alpha).$$ 
If we are able to prove that 
\begin{equation} \label{logf'} \log(f(\alpha))' < 0 \quad \forall \alpha \in \left( 0, \frac{\pi}{2} \right), \end{equation} 
it follows that $f$ is a strictly monotone function on $(0, \pi/2)$.  Since we assumed $\alpha \leq \beta < \frac{\pi}{2}$ by ~\eqref{g'} we see that if $f$ is strictly monotone, then $g'(\alpha) \neq 0$ for all $\alpha \in (0, \beta)$.  We began by proving that each $\alpha$ uniquely determines a $\beta$ which shows that ~\eqref{def-g} has a unique solution $\alpha$ for any given $p$, and consequently if a solution exists to ~\eqref{array}, then it is unique.  So, the lemma is reduced to proving the following.  

\textbf{Claim:  }  The function 
\begin{equation} \label{u} u(\alpha) := \frac{f'(\alpha)}{f(\alpha)} < 0 \quad \forall \alpha \in \left( 0, \frac{\pi}{2} \right). \end{equation}

A graph produced by Mathematica numerically proves the claim (see Figure ~\ref{graphf}).  However,  some readers may be interested to see that it is possible to prove this ``by hand'' using carefully chosen algebraic manipulations and a bit of calculus.   
\begin{figure} \caption{Graph of $(\log f)'$}\label{graphf} \includegraphics[width=300pt]{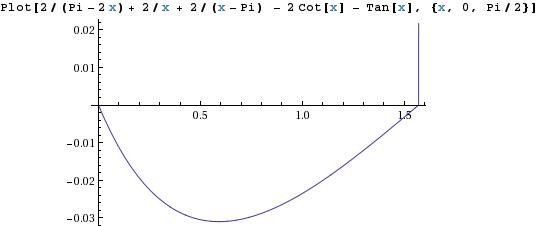} \end{figure} 


We compute 
$$u''(\alpha) = \frac{16}{(\pi - 2\alpha)^3} + \frac{4}{\alpha^3} + \frac{4}{(\alpha- \pi)^3} - \frac{4 \cos(\alpha)}{\sin^3 (\alpha)} - \frac{2 \sin(\alpha)}{\cos^3 (\alpha)}.$$
Using the trigonometric identities 
$$\sin\left( \frac{\pi}{2} - \alpha \right) = \cos (\alpha), \quad \cos \left( \frac{\pi}{2} - \alpha \right) = \sin(\alpha),$$
we see that
$$u''(\alpha) = \frac{4}{(\alpha - \pi)^3} + 4 v(\alpha) + 2 v\left( \frac{\pi}{2} - \alpha \right),$$
where for ease of notation
\begin{equation} \label{v} v(\alpha) := \frac{1}{\alpha^3} - \frac{\cos(\alpha)}{\sin^3 (\alpha)}. \end{equation}

We next compute 
$$v'(\alpha) = - \frac{3}{\alpha^4} + \frac{1}{\sin^2(\alpha)} + \frac{3 \cos^2 (\alpha)}{\sin^4 (\alpha)} = - \frac{3}{\alpha^4} + \frac{1}{\sin^2(\alpha)} + \frac{ 3(1-\sin^2(\alpha))}{\sin^4(\alpha)} $$
$$=  - \frac{3}{\alpha^4} - \frac{2}{\sin^2 (\alpha)} + \frac{3}{\sin^4 (\alpha)}= \frac{ - 3 \sin^4 (\alpha) + 3 \alpha^4 - 2 \alpha^4 \sin^2(\alpha)}{\alpha^4 \sin^4 (\alpha) }.$$
It follows that 
$$v'(\alpha) \geq 0 \iff \left( \frac{\sin(\alpha)}{\alpha} \right)^4 \leq 1 - \frac{2 \sin^2 (\alpha)}{3}.$$
By the power series expansion for sine, 
$$\sin(\alpha) \leq \alpha - \frac{\alpha^3}{6} + \frac{\alpha^5}{120},$$
so 
$$\left( \frac{\sin(\alpha)}{\alpha} \right)^4 \leq \left( 1 - \frac{\alpha^2}{6} + \frac{\alpha^4}{120} \right)^4,$$
and 
$$1 - \frac{2 \sin^2 (\alpha)}{3} \geq 1 - \frac{2}{3} \alpha^2 \left( 1 - \frac{\alpha^2}{6} + \frac{\alpha^4}{120} \right)^2.$$
We can prove that $v'(\alpha) \geq 0$ if we can show that 
$$ \left( 1 - \frac{\alpha^2}{6} + \frac{\alpha^4}{120} \right)^4 \leq  1 - \frac{2}{3} \alpha^2 \left( 1 - \frac{\alpha^2}{6} + \frac{\alpha^4}{120} \right)^2,$$
which is equivalent to 
$$\left( 1 - \frac{\alpha^2}{6} + \frac{\alpha^4}{120} \right)^4 +  \frac{2}{3} \alpha^2 \left( 1 - \frac{\alpha^2}{6} + \frac{\alpha^4}{120} \right)^2 - 1 \leq 0.$$
Well, this is none other than a quadratic equation in 
$$t^2 := \left(1 - \frac{\alpha^2}{6} + \frac{\alpha^4}{120}\right)^2.$$  
The quadratic formula shows that 
$$t^4 + \frac{2}{3}\alpha^2 t^2 - 1 = 0 \iff t^2 = - \frac{ \alpha^2}{3} \pm \sqrt{ \frac{ \alpha^2}{9} + 1}.$$
Pausing for a moment to think about the graph of the function $t^4 + \frac{2}{3}\alpha^2 t^2 - 1 $, we see that it is non-positive if and only if 
$$- \frac{\alpha^2}{3} - \sqrt{ \frac{\alpha^4}{9} + 1} \leq t^2 \leq - \frac{\alpha^2}{3} + \sqrt{ \frac{\alpha^4}{9} + 1}.$$
Now,  letting $(1 - \alpha^2/6 + \alpha^4/120)$ play the role of $t$, 
$$\left( 1 - \frac{\alpha^2}{6} + \frac{\alpha^4}{120} \right)^4 \leq  1 - \frac{2}{3} \alpha^2 \left( 1 - \frac{\alpha^2}{6} + \frac{\alpha^4}{120} \right)^2$$
$$ \iff - \frac{\alpha^2}{3} - \sqrt{ \frac{\alpha^4}{9} + 1} \leq \left( 1 - \frac{\alpha^2}{6} + \frac{\alpha^4}{120} \right)^2 \leq - \frac{\alpha^2}{3} + \sqrt{ \frac{\alpha^4}{9} + 1}.$$ 
Clearly the left inequality always holds.  So it suffices to prove the right inequality.  Since 
$$\left( 1 - \frac{\alpha^2}{6} + \frac{\alpha^4}{120} \right)^2 \leq 1 - \frac{\alpha^2}{3} + \frac{2 \alpha^4}{45},$$
and 
$$\left( 1 + \frac{2 \alpha^4}{45} \right)^2 \leq 1 + \frac{\alpha^4}{9} \implies 1 + \frac{2\alpha^4}{45} \leq \sqrt{ \frac{\alpha^4}{9} +1},$$
we have 
$$\left( 1 - \frac{\alpha^2}{6} + \frac{\alpha^4}{120} \right)^2 \leq - \frac{\alpha^2}{3} +1+ \frac{2 \alpha^4}{45} \leq -\frac{\alpha^2}{3} + \sqrt{ \frac{\alpha^4}{9} + 1}.$$
This shows that $v'(\alpha) \geq 0$.  Since 
$$u''(\alpha) = \frac{4}{(\alpha- \pi)^3} + 4 v(\alpha) + 2 v\left( \frac{\pi}{2} - \alpha \right),$$
for 
$$\alpha \leq \frac{\pi}{4} \implies u''(\alpha) \geq \frac{4}{(\pi/4 - \pi)^3} + 2 v\left( \frac{\pi}{4} \right) > 0,$$
and for 
$$\alpha > \frac{\pi}{4} \implies u''(\alpha) \geq - \frac{32}{\pi^3} + 4 v\left( \frac{\pi}{4} \right) > 0.$$
Consequently $u$ is convex.  Since $u(0) = u(\pi/2) = 0$, $u(\alpha) < 0$ on $(0, \pi/2)$.  
\end{proof}  

\begin{ex} Find an example of two different acute trapezoids with identical area, perimeter, and constant term $a_0$ in the heat trace ~\eqref{heat}.  This shows that it is impossible to use the first three terms in the heat trace expansion to prove that isospectral trapezoids are congruent.  
       \end{ex}

\section{Hearing the regular $n$-gon} \label{s4}
\subsection{Proof of Theorem~\ref{t2}}
We begin with a short proof of Theorem~\ref{t2} based on Proposition~\ref{p1}.  
\begin{proof}[Proof of Theorem~\ref{t2}]
Let's assume that for a fixed $n \geq 3$ there is an $n$-gon $Q$ such that 
$$\lambda_k (Q) = \lambda_k (R_n), \quad \forall k \geq 1,$$
where $R_n$ is a regular $n$-gon.  In \S~\ref{s2} we proved that the function 
$$f(Q) = \frac{ \textrm{ Area of } Q}{ (\textrm{Perimeter of }Q)^2}$$
is maximized among all $n$-gons by a regular $n$-gon.  Since the spectrum determines the heat trace, by ~\eqref{heat} it follows that $Q$ and $R_n$ have the same area and perimeter so, 
$$f(Q) = f(R_n).$$
Consequently, $Q$ is also regular, and since $Q$ and $R_n$ have the same perimeter and area, they are the same regular $n$-gon up to a rigid motion.  
\end{proof} 
 
\subsection{Proof of the weak P\'olya-Szeg{\H{o}} conjecture}
The proof of Theorem ~\ref{t3} is based on Proposition ~\ref{pr-a}, Theorem ~\ref{t2}, and Proposition ~\ref{propcd}.  

\begin{proof}[Proof of Theorem~\ref{t3}] 
For the sake of contradiction, let's assume that there exist convex $n$-gons $\{\Omega_k\}_{k=1} ^\infty$ with 
\begin{equation} \label{contra} \lambda_i (\Omega_k) = \lambda_i (P_k), \quad \forall i < k,  \end{equation} 
where $P_k$ is a regular $n$-gon, and each $\Omega_k$ is \em not \em regular.  The scaling property for the eigenvalues  shows that ~\eqref{contra} is equivalent to 
\begin{equation} \label{contra2} \lambda_i (\tilde{\Omega}_k) = \lambda_i (P), \quad \forall i < k, \end{equation} 
where $P$ is the regular $n$-gon with diameter equal to 1, and $\tilde{\Omega}_k$ is $\Omega_k$ scaled by $d_k^{-1}$, where $d_k$ is the diameter of $P_k$.  Slightly abusing notation, we'll write $\Omega_k$ rather than $\tilde{\Omega}_k$.  

By Proposition ~\ref{pr-a} since for $k > 3$ the domains $\Omega_k$ have the same first two eigenvalues, it follows that $\Omega_k$ can neither collapse nor explode as $k \to \infty$.  Passing to a sub-sequence if necessary, we assume that 
$$\Omega_k \to \Omega_0,$$
where $\Omega_0$ is a convex $j$-gon for some $3 \leq j \leq n$.  We know that $j \leq n$ because as the $n$-gons $\Omega_k \to \Omega_0$, they cannot magically obtain more sides, however some of the interior angles could tend toward $\pi$ causing the number of sides to decrease.  So, $j > n$ is impossible, but what if $j< n$?    The eigenvalues are continuous, and so the eigenvalues of $\Omega_k$ tend to those of  $\Omega_0$, and 
$$\lambda_i (P) = \lambda_i (\Omega_k) \to \lambda_i (\Omega_0), \quad \forall i < k, \quad \textrm{ as $k \to \infty$.}$$
Therefore, 
\begin{equation}\label{o0} \lambda_i (\Omega_0) = \lambda_i (P), \quad \forall i. \end{equation} 
The area and perimeter of $\Omega_0$ are determined by the spectrum which coincides with that of $P$, so $f(\Omega_0) = f(P)$.  If $\Omega_0$ is a $j$-gon for some $j < n$, then 
$$f(P) = f(\Omega_0) \leq f(P_j) < f(P),$$ 
where $P_j$ is a regular $j$-gon.  That's rubbish!  So we see that $\Omega_0$ is also an $n$-gon.  By Theorem~\ref{t1} and ~\eqref{o0}, $\Omega_0 \cong P$.  

By Proposition~\ref{propcd}, we can parametrize  $\{\Omega_k \}_{k=0} ^\infty$ by $\{\epsilon_k\}_{k=0} ^\infty \subset \R^{2n-3}$.  Since $\Omega_k \to \Omega_0$ as $k \to \infty$, 
$$\lim_{k \to \infty} \epsilon_k = \epsilon_0,$$ 
and the set $\{\epsilon_k\}_{k=0} ^\infty$ is a compact subset of $\R^{2n-3}$.  By Theorem~\ref{th-cd}, there exists $N = N(n)$ such that if the first $N$ eigenvalues of $\D$ with respect to the metric $g(\epsilon_k)$ coincide with the first $N$ eigenvalues of $g(\epsilon_0)$, then all the eigenvalues coincide.  Since the eigenvalues of $\D$ with respect to the metric $g(\epsilon_k)$ are identical to the eigenvalues of $\Omega_k$, this shows that for all $k > N$, 
$$\lambda_i (\Omega_k) = \lambda_i (\Omega_0) = \lambda_i (P), \quad \forall i \leq N \implies \lambda_i (\Omega_k) = \lambda_i (P), \quad \forall i.$$
By Theorem~\ref{t1}, this implies that 
$$\Omega_k \cong P \quad \forall k > N,$$
which contradicts our assumption that $\Omega_k$ is \em not \em regular for all $k \in \N$.  

We conclude that there exists an $N$ which depends on $n$ such that if the first $N$ eigenvalues of a convex $n$-gon $\Omega$ coincide with those of a regular $n$-gon, then $\Omega$ is congruent to that regular $n$-gon.  
\end{proof} 

\section{Conjectures and open problems} \label{s5}
The P\'olya-Szeg{\H{o}} Conjecture would indicate that $N = N(n)$ in Theorem~\ref{t3} may be taken equal to 1, however, that conjecture assumes the polygons all have fixed area.  Since Theorem~\ref{t3} holds without the area-normalization assumption, we propose that the original conjecture may be strengthened as follows.  

\begin{conj}[Strong P\'olya-Szeg\H{o} Conjecture] The number $N= N(n)$ in Theorem~\ref{t3} may be taken equal to $1$.  
\end{conj}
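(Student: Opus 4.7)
The plan is to emulate the proof of Theorem~\ref{t3}, supplementing the compactness-plus-analyticity framework with the standard P\'olya-Szeg\H{o} Conjecture in its scale-invariant form. Assuming P\'olya-Szeg\H{o}, the functional $\Omega \mapsto \lambda_1(\Omega)\,|\Omega|$ attains its unique minimum over convex $n$-gons on the similarity class of $R_n$. So if $\lambda_1(\Omega) = \lambda_1(R_n)$ one obtains immediately $|\Omega| \geq |R_n|$, with equality forcing $\Omega$ similar to $R_n$ and, together with the matched eigenvalue, congruent to $R_n$. The entire question is thus reduced to producing the reverse inequality $|\Omega| \leq |R_n|$ from the single datum $\lambda_1(\Omega) = \lambda_1(R_n)$.

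To attempt that reverse inequality I would run a contradiction argument parallel to the proof of Theorem~\ref{t3}. Suppose $\{\Omega_k\}$ is a sequence of convex $n$-gons, none congruent to $R_n$, with $\lambda_1(\Omega_k) = \lambda_1(R_n)$ for every $k$. After a suitable normalization one would hope to extract a subsequential limit $\Omega_\infty$ that is a convex $j$-gon for some $j \leq n$, use continuity of the eigenvalues to transfer the equality $\lambda_1(\Omega_\infty) = \lambda_1(R_n)$, invoke Proposition~\ref{p1} together with P\'olya-Szeg\H{o} to rule out $j<n$ and force $\Omega_\infty \cong R_n$, and finally apply Theorem~\ref{th-cd} (Chang-De~Turck) to the analytic family of Proposition~\ref{propcd} to conclude $\Omega_k \cong R_n$ for all large $k$, a contradiction.

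The main obstacle is the compactness step. Proposition~\ref{pr-a} simultaneously confined the in-radii and diameters to a compact subset of $(0,\infty)$ by combining $\lambda_1$ with $\lambda_2$ via an upper bound on the fundamental gap, and I see no way to recover this control with $\lambda_1$ alone. The difficulty is made concrete by the scaling symmetry $\lambda_1(c\Omega) = c^{-2}\lambda_1(\Omega)$: for any non-regular convex $n$-gon $\Omega'$, the scaled copy $c\Omega'$ with $c = \sqrt{\lambda_1(\Omega')/\lambda_1(R_n)}$ matches $\lambda_1(R_n)$ yet has strictly larger area, violating the target $|\Omega| \leq |R_n|$. A successful proof would therefore require either a quantitative stability form of P\'olya-Szeg\H{o}, strong enough to rule out such scaled competitors under the hypothesis $\lambda_1(\Omega) = \lambda_1(R_n)$, or an altogether new mechanism by which $\lambda_1$ alone senses the scale of $\Omega$; identifying such a mechanism is where the decisive work lies.
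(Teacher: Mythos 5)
You should first be aware that the paper does not prove this statement: it appears in \S~\ref{s5} as an open conjecture, so there is no proof of record to compare yours against. Judged on its own terms, your analysis is sound, and your closing observation is actually stronger than you present it. The rescaling you describe is not merely an obstruction to the method of Theorem~\ref{t3}; it is a counterexample to the conjecture as literally stated. Take any non-regular convex $n$-gon $\Omega'$ and set $c=\sqrt{\lambda_1(\Omega')/\lambda_1(R_n)}$; then $\lambda_1(c\Omega')=c^{-2}\lambda_1(\Omega')=\lambda_1(R_n)$, so $c\Omega'$ satisfies the hypothesis of Theorem~\ref{t3} with $N=1$ yet is not congruent (nor even similar) to $R_n$. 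Hence $N$ cannot be taken equal to $1$ without reinstating a normalization such as fixed area: a single eigenvalue is a one-dimensional datum freely adjustable by scaling, whereas the pair $(\lambda_1,\lambda_2)$ used in Proposition~\ref{pr-a} pins down the scale through the upper bound on the fundamental gap. You are right that this compactness step has no analogue for $\lambda_1$ alone.

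Your first paragraph is the correct way to salvage the statement. With the area normalization restored, the scale-invariant functional $\Omega\mapsto\lambda_1(\Omega)\,|\Omega|$ does exactly what you say: the P\'olya-Szeg{\H{o}} Conjecture asserts it is uniquely minimized at the regular $n$-gon, so $\lambda_1(\Omega)=\lambda_1(R_n)$ and $|\Omega|=|R_n|$ together would force $\Omega\cong R_n$. In other words, the normalized $N=1$ statement is an immediate consequence of the original conjecture (itself open for $n\geq 5$), while the unnormalized statement is false. The honest conclusion of your proposal --- that no argument along the lines of Theorem~\ref{t3} can close the gap because $\lambda_1$ cannot sense scale --- is exactly right; the only thing to add is that this difficulty is fatal to the statement itself, not just to the proof strategy.
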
 

For the special case of triangles, Antunes and Freitas made the natural conjecture in \cite{af}, that the first three eigenvalues uniquely determine a triangle.  They provided a vast amount of supporting numerical data, so that one may consider the conjecture to be ``numerically'' a theorem.  Since three parameters uniquely determine a triangle, this conjecture may seem rather obvious at first glance, and one may ask more generally whether any three eigenvalues would suffice.  Intriguingly, [ibid] showed that the first, second, and fourth eigenvalues \em cannot \em uniquely determine a triangle, so the conjecture appears to be more subtle than one might expect.  

\begin{conj}[Antunes-Freitas] If the first three eigenvalues of two triangles coincide, then they are identical up to a rigid motion. 
\end{conj}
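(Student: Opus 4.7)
The plan is to combine the Chang-De Turck finiteness principle (Theorem~\ref{th-cd}) with a sharp Jacobian analysis for the first three eigenvalues on the three-dimensional shape space of triangles, and then upgrade local to global injectivity using Proposition~\ref{pr-a} and Durso's theorem.

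First I would parametrize the moduli space $\M_3$ of Euclidean triangles modulo rigid motions as a three-dimensional real-analytic manifold, conveniently via the family $g(\epsilon)$ on the unit disk $\D$ from Proposition~\ref{propcd} with $\epsilon \in \R^3$, and consider the map
$$\Lambda_3 : \M_3 \to \R^3, \qquad T \mapsto (\lambda_1(T), \lambda_2(T), \lambda_3(T)).$$
The conjecture is exactly the injectivity of $\Lambda_3$. The second step is local: show that the differential $d\Lambda_3$ is nondegenerate at every $T \in \M_3$. Using Hadamard-type variational formulas, each partial derivative $\partial\lambda_k/\partial\epsilon_j$ can be written as a boundary integral of $|\partial u_k/\partial\nu|^2$ against the infinitesimal normal displacement induced by varying $\epsilon_j$. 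I would first verify nondegeneracy at the three triangles where the spectrum is explicit (equilateral, right isosceles, and $30$--$60$--$90$), and then extend by real-analytic continuation of the generically simple eigenvalues. At eigenvalue crossings between $\lambda_2$ and $\lambda_3$ one should pass to the elementary symmetric functions $\lambda_2+\lambda_3$ and $\lambda_2\lambda_3$, which remain real-analytic through the crossing.

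Third, with local injectivity in hand, I would fix $T_0$ and study the fiber $\Lambda_3^{-1}(\Lambda_3(T_0))$. The proof of Proposition~\ref{pr-a} shows that triangles sharing the first two eigenvalues have diameters and in-radii confined to a fixed compact subset of $(0,\infty)$, so the fiber is compact in $\M_3$, and by local injectivity it is finite. Applying Theorem~\ref{th-cd} to a compact neighborhood of this finite fiber yields an integer $N$ such that $\Lambda_N$ is injective on that neighborhood; combined with Durso's theorem, this forces the fiber to consist of $T_0$ alone.

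The hard part, by far, is the Jacobian nondegeneracy. It cannot follow from dimension counting, because Antunes and Freitas showed in \cite{af} that $(\lambda_1,\lambda_2,\lambda_4)$ already fails to determine a triangle; any valid argument must use something genuinely special about $\lambda_3$, presumably its nodal structure (a single nodal line, versus two for $\lambda_4$). Even at the equilateral triangle the three gradients must be computed from explicit eigenfunctions on the fundamental domain and matched to deformations of the Schwarz-Christoffel parameters, and extending the nondegeneracy to all of $\M_3$ appears to require a new geometric input not currently in the literature. This is why the conjecture, despite being ``numerically a theorem,'' remains open.
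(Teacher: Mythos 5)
This statement is not proved in the paper at all: it appears in \S 5 as an open conjecture of Antunes and Freitas, supported only by the numerical evidence of \cite{af}, so there is no argument of the authors to compare yours against. Your proposal is a strategy outline rather than a proof, and you say as much yourself, but it is worth pinpointing exactly where it breaks. The first gap is the one you flag: the nondegeneracy of $d\Lambda_3$ on all of $\M_3$ is precisely the missing mathematics, and it is delicate even at the explicit triangles (at the equilateral triangle $\lambda_2=\lambda_3$, so the map $(\lambda_1,\lambda_2,\lambda_3)$ is not even differentiable there in the naive sense, and passing to symmetric functions changes which map you are proving injective). Moreover, even everywhere-nondegenerate differentials would only give \emph{local} injectivity, which does not rule out two distant triangles sharing three eigenvalues.

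The second gap is a genuine non sequitur in your third step. Granting that the fiber $\Lambda_3^{-1}(\Lambda_3(T_0))$ is compact (Proposition~\ref{pr-a} does give the confinement you want) and finite, applying Theorem~\ref{th-cd} produces an $N$ such that agreement of the first $N$ eigenvalues forces agreement of the whole spectrum, and Durso then gives congruence --- but this only shows that $\Lambda_N$ separates points, not $\Lambda_3$. Two distinct triangles in your finite fiber agree on $\lambda_1,\lambda_2,\lambda_3$ and may perfectly well differ at $\lambda_4,\dots,\lambda_N$; nothing in the Chang--De Turck machinery lets you shrink $N$ down to $3$. That reduction is the entire content of the conjecture, and the paper's own ``weak'' results (Theorem~\ref{t3}) deliberately stop at an unspecified finite $N$ for exactly this reason. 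Your observation that $(\lambda_1,\lambda_2,\lambda_4)$ fails to determine a triangle is the right warning sign: any proof must exploit something specific to the triple $(\lambda_1,\lambda_2,\lambda_3)$, and no step in your outline does so.
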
 

One can also consider isospectral problems for the \em length spectrum, \em the set of lengths of closed geodesics.  It's usually considered a good idea to make pure mathematical conjectures based on observations in physics or nature.  In this setting, we are reminded of bats who use echolation to determine their location from objects and prey.  A bat emits a sound (which is generally inaudible to the human ear) and remarks the time(s) at which the sound is reflected back.  It is only possible for the bat to detect a finite amount of return times, which mathematically correspond to the lengths of finitely many closed geodesics.  Inspired by nature we make the following ``bat conjecture.''  


\begin{conj} \label{bats} For each $n \geq 3$ there exists $N = N(n)$ such that if the lengths of $N$ primitive closed geodesics of two convex $n$-gons coincide, then they are identical up to a rigid motion.  
\end{conj}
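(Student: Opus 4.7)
The plan is to mirror the strategy of Theorem~\ref{t3}, substituting the length spectrum for the Laplace spectrum. By Proposition~\ref{propcd}, after fixing a normalization (e.g., shortest side of length one) the moduli space $\M_n$ of convex $n$-gons is parametrized by real-analytic parameters $\epsilon \in \R^{2n-3}$. Within each stratum on which a given primitive closed billiard orbit has a fixed combinatorial type (i.e., a fixed cyclic sequence of reflecting sides), the length of that orbit is a real-analytic function of $\epsilon$, because the reflection law yields a finite system of analytic equations in the vertex coordinates whose solution persists under small perturbations.

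First I would establish a compactness statement in the spirit of Proposition~\ref{pr-a}. Suppose, for contradiction, that a sequence $\{\Omega_k\}$ of mutually non-congruent convex $n$-gons shares with a reference polygon $\Omega_0$ a growing set of primitive geodesic lengths. The bouncing-ball orbit between two parallel supporting lines has length twice the corresponding width, giving direct control of widths, and hence of in-radii and diameters, from the fixed lengths. A subsequence then converges to a limiting convex $j$-gon $\Omega_\infty$ ($j \leq n$) whose full length spectrum matches that of $\Omega_0$.

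Next I would invoke the analog of Theorem~\ref{th-cd}. On any compact subset $K$ of parameter space restricted to a single combinatorial stratum, the Noetherian property of germs of real-analytic functions in finitely many variables implies that the ideal generated by $\{L_i(\epsilon) - L_i(\epsilon_0)\}_{i\geq 1}$ is finitely generated, producing an integer $N = N(K)$ such that matching the first $N$ primitive lengths forces matching of the entire length spectrum. Coupling this with the compactness argument and a final rigidity step — showing that two convex $n$-gons with identical length spectra must be congruent — would close the loop and yield the desired $N = N(n)$.

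The main obstacle will be this final rigidity step. Unlike the Laplace spectrum, the length spectrum is not known to determine the area, perimeter, or interior angles of a polygon, so the heat-trace shortcut that powered Theorems~\ref{t1} and \ref{t2} is unavailable. One plausible route is to extract the area and perimeter from an asymptotic counting law for short primitive orbits (in the spirit of the Duistermaat--Guillemin analysis cited in \S\ref{s2}), and then combine with Proposition~\ref{p1}; an alternative is to prove rigidity directly against the regular $n$-gon, exploiting its large symmetry group. A secondary but genuine difficulty is that the combinatorial classification of primitive billiard orbits in a convex $n$-gon is not well understood, so the analytic dependence hypothesis of Theorem~\ref{th-cd} must be handled stratum by stratum, with careful control over how orbits bifurcate or coalesce at stratum boundaries.
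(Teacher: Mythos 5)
You are attempting to prove Conjecture~\ref{bats}, which the paper states in \S\ref{s5} explicitly as an \emph{open problem}; the paper contains no proof of it, so there is nothing to compare your argument against, and your proposal should be judged on its own. As written it is a program rather than a proof, and the gaps are more serious than your closing caveats suggest. First, the Chang--DeTurck machinery (Theorem~\ref{th-cd}) does not transfer to the length spectrum in the way you describe. Its proof exploits the fact that the eigenvalues carry a canonical ordering and that symmetric functions of them depend real-analytically on the parameter $\eps$. Periodic billiard orbits carry no such labeling: they are created and destroyed under arbitrarily small deformations of the vertices (a perpendicular bouncing-ball orbit between two parallel sides vanishes the instant the sides cease to be parallel), so the ``first $N$ primitive lengths'' is not even a continuous function of $\eps$, let alone analytic on strata whose boundaries you would need to control. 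Your stratum-by-stratum fix does not repair this, because the hypothesis of the conjecture gives you $N$ matching lengths with no information about which combinatorial types realize them. Second, your compactness step assumes that widths are realized by closed geodesics; in a polygon the segment realizing the width generically ends at a vertex, where the billiard reflection is undefined, and indeed even the existence of a single periodic billiard trajectory in every obtuse triangle is a famous open problem.

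Third, and most importantly, the ``final rigidity step'' you defer --- that two convex $n$-gons with identical length spectra are congruent --- is not a technical loose end; it is at least as strong as the conjecture itself and is wide open. The heat-trace invariants (area, perimeter, the angle sum in \eqref{heat}) that power Theorems~\ref{t1} and~\ref{t2} are not known to be recoverable from the length spectrum, and the Duistermaat--Guillemin/Friedlander results cited in \S\ref{s2} run in the opposite direction (Laplace spectrum determines length spectrum, not conversely). Your proposal is a reasonable template for how one might eventually attack the conjecture, but every one of its three stages currently rests on an unproved assertion, so it does not constitute a proof.
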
 

Finally two natural questions arise from our work, the more tractable of which is the following.  

\begin{question}
Is a trapezoid uniquely determined by its spectrum?  Do there exist isospectral trapezoids which are not isometric?  
\end{question}

More generally, we are very curious to know the answer to the following.  

\begin{question}
Can one hear the shape of a convex $4$-sided drum? 
\end{question}

\section*{Acknowledgements} 
The first author is supported by NSF grant DMS-12-06748, and the second author gratefully acknowledges the support of the Max Planck Institut f\"ur Mathematik in Bonn.

\begin{bibdiv}

\begin{biblist}
\bib{ac}{article}{ author = {Andrews, Ben}, author={Clutterbuck, Julie}, title={Proof of the fundamental gap conjecture}, journal= {J. Amer. Math. Soc.}, volume={24}, year={2011}, pages={899--916}}

\bib{afn}{article}{
   author={Antunes, Pedro},
   author={Freitas, Pedro},
   title={New bounds for the principal Dirichlet eigenvalue of planar
   regions},
   journal={Experiment. Math.},
   volume={15},
   date={2006},
   number={3},
   pages={333--342},
   issn={1058-6458},
   review={\MR{2264470 (2007e:35039)}},
}

\bib{af}{article}{
   author={Antunes, Pedro R. S.},
   author={Freitas, Pedro},
   title={On the inverse spectral problem for Euclidean triangles},
   journal={Proc. R. Soc. Lond. Ser. A Math. Phys. Eng. Sci.},
   volume={467},
   date={2011},
   number={2130},
   pages={1546--1562},
   issn={1364-5021},
   review={\MR{2795790 (2012e:65242)}},
   doi={10.1098/rspa.2010.0540},
}

\bib{vdb}{article}{
   author={van den Berg, M.},
   author={Srisatkunarajah, S.},
   title={Heat equation for a region in ${\bf R}^2$ with a polygonal
   boundary},
   journal={J. London Math. Soc. (2)},
   volume={37},
   date={1988},
   number={1},
   pages={119--127},
   issn={0024-6107},
   review={\MR{921750 (89e:35062)}},
   doi={10.1112/jlms/s2-37.121.119},
}

\bib{iso}{article}{
   author={Bl{\aa}sj{\"o}, Viktor},
   title={The isoperimetric problem},
   journal={Amer. Math. Monthly},
   volume={112},
   date={2005},
   number={6},
   pages={526--566},
   issn={0002-9890},
   review={\MR{2142606 (2006a:51017)}},
   doi={10.2307/30037526},
}

\bib{buser}{article}{author={Buser, Peter}, title={Isospectral Riemann surfaces}, journal={Ann. Inst. Fourier}, volume={36}, year={1986}, pages={167--192}} 

\bib{c}{unpublished}{
   author={Chang, Pei-Kun},
   title={Thesis},
   note={Thesis (Ph.D.)--University of Pennsylvania},
   date={1988},
}

\bib{cd}{article}{
   author={Chang, Pei-Kun},
   author={DeTurck, Dennis},
   title={On hearing the shape of a triangle},
   journal={Proc. Amer. Math. Soc.},
   volume={105},
   date={1989},
   number={4},
   pages={1033--1038},
   issn={0002-9939},
   review={\MR{953738 (89h:58194)}},
   doi={10.2307/2047071},
}

	\bib{chapman}{article}{author={Chapman, S. J.}, title={Drums that sound the same}, journal={Amer. Math. Monthly}, volume={102}, year={1995}, pages={124--138}} 

\bib{chavel}{book}{
   author={Chavel, Isaac},
   title={Eigenvalues in Riemannian geometry},
   series={Pure and Applied Mathematics},
   volume={115},
   note={Including a chapter by Burton Randol;
   With an appendix by Jozef Dodziuk},
   publisher={Academic Press Inc.},
   place={Orlando, FL},
   date={1984},
   pages={xiv+362},
   isbn={0-12-170640-0},
   review={\MR{768584 (86g:58140)}},
}

\bib{cour-hil}{book}{
   author={Courant, R.},
   author={Hilbert, D.},
   title={Methoden der Mathematischen Physik. Vols. I, II},
   publisher={Interscience Publishers, Inc., N.Y.},
   date={1943},
   pages={xiv+469 pp., xiv+549},
   review={\MR{0009069 (5,97b)}},
}

\bib{sc}{book}{
   author={Driscoll, Tobin A.},
   author={Trefethen, Lloyd N.},
   title={Schwarz-Christoffel mapping},
   series={Cambridge Monographs on Applied and Computational Mathematics},
   volume={8},
   publisher={Cambridge University Press},
   place={Cambridge},
   date={2002},
   pages={xvi+132},
   isbn={0-521-80726-3},
   review={\MR{1908657 (2003e:30012)}},
   doi={10.1017/CBO9780511546808},
}

\bib{dg}{article}{
   author={Duistermaat, J. J.},
   author={Guillemin, V. W.},
   title={The spectrum of positive elliptic operators and periodic
   bicharacteristics},
   journal={Invent. Math.},
   volume={29},
   date={1975},
   number={1},
   pages={39--79},
   issn={0020-9910},
   review={\MR{0405514 (53 \#9307)}},
}

\bib{dur}{book}{
   author={Durso, Catherine},
   title={On the inverse spectral problem for polygonal domains},
   note={Thesis (Ph.D.)--Massachusetts Institute of Technology},
   publisher={ProQuest LLC, Ann Arbor, MI},
   date={1988},
   pages={(no paging)},
   review={\MR{2941198}},
}

\bib{fed}{article}{
   author={Fedosov, B. V.},
   title={Asymptotic formulae for the eigenvalues of the Laplace operator in
   the case of a polygonal domain},
   language={Russian},
   journal={Dokl. Akad. Nauk SSSR},
   volume={151},
   date={1963},
   pages={786--789},
   issn={0002-3264},
   review={\MR{0157095 (28 \#335)}},
}

\bib{frtri}{article}{author= {Freitas, Pedro}, title={Precise bounds and asymptotics for the first Dirichlet eigenvalue of triangles and rhombi}, journal={Jour. Funct. Anal.}, volume={251}, year={2007}, pages={376--398}} 

\bib{fried}{article}{author={Friedlander, F. G.}, title={On the wave equation in plane regions with polygonal boundary}, journal={Advances in microlocal analysis}, place={Lucca, 1985}, year={1986}, pages={135--150}} 

\bib{strip}{article}{author={Friedlander, L}, author={Solomyak, M.}, title={On the Spectrum of the Dirichlet Laplacian in a Narrow Strip}, journal={Israel J. Math.}, volume={170}, year={2009}, pages={337--354}}

\bib{gww}{article}{
   author={Gordon, Carolyn},
   author={Webb, David L.},
   author={Wolpert, Scott},
   title={One cannot hear the shape of a drum},
   journal={Bull. Amer. Math. Soc. (N.S.)},
   volume={27},
   date={1992},
   number={1},
   pages={134--138},
   issn={0273-0979},
   review={\MR{1136137 (92j:58111)}},
   doi={10.1090/S0273-0979-1992-00289-6},
}

\bib{gww1}{article}{
   author={Gordon, C.},
   author={Webb, D.},
   author={Wolpert, S.},
   title={Isospectral plane domains and surfaces via Riemannian orbifolds},
   journal={Invent. Math.},
   volume={110},
   date={1992},
   number={1},
   pages={1--22},
   issn={0020-9910},
   review={\MR{1181812 (93h:58172)}},
   doi={10.1007/BF01231320},
}

\bib{gm}{unpublished}{
author={Grieser, D.},
author={Maronna, S.},
title={One can hear the shape of a triangle},
note={arXiv 1208.3163},
year={2012},}

\bib{kac}{article}{
   author={Kac, Mark},
   title={Can one hear the shape of a drum?},
   journal={Amer. Math. Monthly},
   volume={73},
   date={1966},
   number={4},
   pages={1--23},
   issn={0002-9890},
   review={\MR{0201237 (34 \#1121)}},
}

\bib{crm}{unpublished}{author={Lu, Zhiqin}, author={Rowlett, Julie}, title={The fundamental gap and one-dimensional collapse}, note={to appear in Proc. of the C.~R.~M., \url{http://arxiv.org/abs/0810.4937}}, year={2013}}

\bib{ms}{article}{
   author={McKean, H. P., Jr.},
   author={Singer, I. M.},
   title={Curvature and the eigenvalues of the Laplacian},
   journal={J. Differential Geometry},
   volume={1},
   date={1967},
   number={1},
   pages={43--69},
   issn={0022-040X},
   review={\MR{0217739 (36 \#828)}},
}

\bib{milnor}{article}{ author = {Milnor, John}, title={Eigenvalues of the Laplace operator on certain manifolds}, journal={Proc. Nat. Adad. Sci. USA}, volume={51}, number={4}, year={1964}, pages={542}}

\bib{pw}{article}{
   author={Payne, L. E.},
   author={Weinberger, H. F.},
   title={Some isoperimetric inequalities for membrane frequencies and
   torsional rigidity},
   journal={J. Math. Anal. Appl.},
   volume={2},
   date={1961},
   pages={210--216},
   issn={0022-247x},
   review={\MR{0149735 (26 \#7220)}},
}

\bib{perron}{article}{ author={Perron, Oskar}, title={Zur Existenzfrage eines Maximums oder Minimums}, journal={Jahresber. Deutsch. Math.-Verein.}, volume={22}, year={1913}, pages={140--144}} 

\bib{pl}{article}{
   author={Pleijel, {\AA}ke},
   title={A study of certain Green's functions with applications in the
   theory of vibrating membranes},
   journal={Ark. Mat.},
   volume={2},
   date={1954},
   pages={553--569},
   issn={0004-2080},
   review={\MR{0061257 (15,798g)}},
}

\bib{po}{book}{
   author={P{\'o}lya, G.},
   author={Szeg{\"o}, G.},
   title={Isoperimetric Inequalities in Mathematical Physics},
   series={Annals of Mathematics Studies, no. 27},
   publisher={Princeton University Press},
   place={Princeton, N. J.},
   date={1951},
   pages={xvi+279},
   review={\MR{0043486 (13,270d)}},
}

\bib{rose}{book}{
   author={Rosenberg, Steven},
   title={The Laplacian on a Riemannian manifold},
   series={London Mathematical Society Student Texts},
   volume={31},
   note={An introduction to analysis on manifolds},
   publisher={Cambridge University Press},
   place={Cambridge},
   date={1997},
   pages={x+172},
   isbn={0-521-46300-9},
   isbn={0-521-46831-0},
   review={\MR{1462892 (98k:58206)}},
   doi={10.1017/CBO9780511623783},
}

\bib{stein}{article}{ author={Steiner, Jakob}, title={Einfache Beweise der isoperimetrischen Haupts\"atze}, journal={J. Reine Angew. Math.}, volume={18}, year={1838}, pages={281--296}}

\bib{sun}{article}{
   author={Sunada, Toshikazu},
   title={Riemannian coverings and isospectral manifolds},
   journal={Ann. of Math. (2)},
   volume={121},
   date={1985},
   number={1},
   pages={169--186},
   issn={0003-486X},
   review={\MR{782558 (86h:58141)}},
   doi={10.2307/1971195},
}
	
\bib{weyl}{article}{
   author={Weyl, Hermann},
   title={Das asymptotische Verteilungsgesetz der Eigenwerte linearer
   partieller Differentialgleichungen (mit einer Anwendung auf die Theorie
   der Hohlraumstrahlung)},
   language={German},
   journal={Math. Ann.},
   volume={71},
   date={1912},
   number={4},
   pages={441--479},
   issn={0025-5831},
   review={\MR{1511670}},
   doi={10.1007/BF01456804},
}

\bib{witt}{article}{ author= {Witt, E.}, title={Eine Identit\"at zwischen Modulformen zweiten Grades}, journal={Abh. Math. Sem. Univ. Hamburg}, volume={14}, year={1941}, pages={323--337}} 

\end{biblist}
\end{bibdiv}
\end{document}